\renewcommand{\subjclassname}{AMS \textup{2000} Mathematics Subject Classification:\ }
\newtheorem{cor}{Corollary}
\newtheorem{prop}{Proposition}
\newtheorem{con}{Question}
\newtheorem{conj}{Conjecture}
\theoremstyle{definition}
\newtheorem{defi}{Definition}
\newtheorem{rem}{Remark}
\author{J.M. Grau}
\address{Departamento de Matemáticas, Universidad de Oviedo\\ Avda. Calvo Sotelo, s/n, 33007 Oviedo, Spain}
\email{grau@uniovi.es}
\author{A. M. Oller-Marc\'{e}n}
\address{Centro Universitario de la Defensa\\ Ctra. de Heusca, s/n, 50090 Zaragoza, Spain}
\email{oller@unizar.es}
\author{M. Rodríguez}
\address{}
\email{rodlopmanuel@gmail.com}
\author{D. Sadornil}
\address{Departamento de Matemáticas, Estadística y Computación,
Universidad de Cantabria\\ F. Ciencias, Avda de los Castros s/n,
39005 Santander, Spain}
\email{sadornil@unican.es}
\title{Fermat test with gaussian base and Gaussian pseudoprimes}
\begin{document}
\maketitle

\begin{abstract}
The structure of the group $(\mathbb{Z}/n\mathbb{Z})^\star$ and Fermat's little theorem are the basis for some of best-known
primality testing algorithms. Many related concepts arise:
Euler's totient function and Carmichael's lambda function, Fermat
pseudoprimes, Carmichael and cyclic numbers, Lehmer's totient problem, Giuga's conjecture, etc. In this paper, we
present and study analogues to some of the previous concepts arising when we consider the underlying group
$\mathcal{G}_n:=\{a+bi\in\mathbb{Z}[i]/n\mathbb{Z}[i] :
a^2+b^2\equiv 1\ \textrm{$\pmod n$}\}$. In particular we characterize Gaussian Carmichael numbers via a
Korselt's criterion and we present their relation with Gaussian
cyclic numbers. Finally, we present the relation between Gaussian
Carmichael number and 1-Williams numbers for numbers $n \equiv 3
\pmod{4}$. There are also no known composite numbers less than
$10^{18}$ in this family that are both pseudoprime to base $1+2i$ and 2-pseudoprime.
\end{abstract}

\subjclassname{}

\section{Introduction}

Most of the classical primality tests are based on Fermat's little
theorem: let $p$ be  a prime number and let $a$ be an integer such
that $p\nmid a$, then $a^{p-1} \equiv  1 \pmod p$. This theorem gives
a possible way to detect non-primes: if for a certain $a$ coprime to
$n$, $a^{n-1}\not\equiv 1\pmod{n}$, then $n$ is not prime. The problem is
that the converse is false and there exists composite numbers
$n$ such that $a^{n-1}\equiv 1\pmod{n}$ for some
$a$ coprime to $n$. In this situation $n$ is called
pseudoprime with respect to base $a$ (or $a$-pseudoprime). A composite integer $n$ which is a pseudoprime to any base $a$ such that $\gcd(a,n)=1$ is called a Carmichael number (or absolut pseudoprime).

Fermat theorem can be deduced from
the fact that the units of $\mathbb{Z}/n\mathbb{Z}$ form a subgroup
of order $n-1$ when $n$ is prime. Associated to the subgroup
$(\mathbb{Z}/n\mathbb{Z})^\star$ we can define the well-know Euler's totient
function and Carmichael's lambda function which are defined in the
following way:
$$\varphi(n):=|(\mathbb{Z}/n\mathbb{Z})^\star|, \quad
\lambda(n):=\textrm{exp}(\mathbb{Z}/n\mathbb{Z})^\star.$$

It seems reasonable (and natural) to extend these ideas to other general groups $G_n$.
This extension leads to composite/primality tests according to the following steps:
\begin{itemize}
\item[1º)] Compute $f(n)=|G_n|$ under the assumption that $n$ is prime.
\item[2º)] Given $n$, if we can find $g\in G_n$ such that $|g|\nmid f(n)$, then $n$ is not prime.
\end{itemize}

This idea is present in tests based in lucasian sequences \cite{WI} and elliptic
curves \cite{SIL}. Recent works have developed these concepts in
other contexts. Pinch \cite{PINCH} considers primality tests
based on quadratic rings and discuss the absolute pseudoprimes for
them. Shettler \cite{JORDAN} studies analogues to
Lehmer's Problem Totient and Carmichael numbers in a PID. Steele \cite{ST} generalizes Carmichael numbers to number
rings introducing Carmichael ideals in number rings and proving an analogue to
Korselt's criterion for them.

Following these approaches, in this paper we consider the groups
$$\mathcal{G}_n:=\{a+bi\in\mathbb{Z}[i]/n\mathbb{Z}[i] :
a^2+b^2 \equiv 1\ \textrm{(mod $n$)}\}.$$

For these groups, we define the corresponding Euler and Carmichael
functions and we study some of their properties. We also present the
concepts of Gaussian pseudoprime and Gaussian Carmichael number
presenting an explicit Korselt's criterion. Cyclic numbers, Lehmer's
Totient Problem \cite{BCF} and Giuga's conjecture \cite{GIU} are
also considered in this gaussian setting.

It is known that Carmichael numbers have at least three prime
factors. We show that Gaussian Carmichael numbers with only two prime factors exist
and we determine their form. Moreover, although there are gaussian
pseudoprimes with respect to any base, if we combine our ideas with a
classical Fermat test, we show that no number of the form $4k+3$ smaller that than $10^{18}$ 
passes both tests (for some particular
bases). This strength is possible due
to a relationship with 1-Williams numbers \cite{WI} that we make explicit.

\section{Preliminaries}
In this section we determine the order and structure of the group $\mathcal{G}_n$. We also show some
elementary properties and relations between the Gaussian counterparts of Euler and Carmichael functions.

For any positive integer $n$ we will denote by $\mathcal{I}_n$ the ring of gaussian integers modulo $n$; i.e.,
$$\mathcal{I}_n:=\{a+bi : a,b\in\mathbb{Z}/n\mathbb{Z}\}=\mathbb{Z}[i]/n\mathbb{Z}[i].$$
Further, we will consider the group $\mathcal{G}_n$ defined by
$$\mathcal{G}_n:=\{a+bi\in\mathcal{I}_n : a^2+b^2\equiv 1\ \textrm{(mod $n$)}\}.$$
Once we have defined the group
we can define the following arithmetic functions:
$$\Phi(n):=|\mathcal{G}_n|,\ \ \rightthreetimes(n):=\textrm{exp}(\mathcal{G}_n).$$
Note that $\Phi$ and $\rightthreetimes$ are the analogues to Euler's totient funtion and Carmichael's lambda functions, respectively.

It is quite clear that if $n=p_1^{r_1}\cdots p_s^{r_s}$, then
$$\mathcal{G}_n\cong \mathcal{G}_{p_1^{r_1}}\times\cdots\times\mathcal{G}_{p_s^{r_s}}.$$
As a consequence, if $\gcd(m,n)=1$, $\Phi(mn)=\Phi(m)\Phi(n)$ and $\rightthreetimes(mn)=\textrm{lcm}(\rightthreetimes(m),\rightthreetimes(n))$. Hence, in order to study the group 
$\mathcal{G}_n$ we can restrict ourselves to the case when $n$ is a
prime power. 

\begin{prop}
Let $p$ be a prime and let $k>0$ be an integer. Then:
$$\mathcal{G}_{p^k}\cong\begin{cases} C_2, & \textrm{if $p=2$ and $k=1$};\\   C_{2^{k-2}}\times C_2\times C_4, & \textrm{if $p=2$ and $k\geq 2$};\\ C_{p^{k-1}}\times C_{p-1}, & \textrm{if $p\equiv 1$ (mod $4$)};\\ C_{p^{k-1}}\times C_{p+1}, & \textrm{if $p\equiv 3$ (mod $4$)}.\end{cases}$$
\end{prop}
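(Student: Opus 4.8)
The plan is to reduce everything to the arithmetic of the finite ring $R:=\mathbb{Z}[i]/p^k\mathbb{Z}[i]$ together with the multiplicative norm $N(a+bi)=a^2+b^2=(a+bi)(a-bi)$, observing first that $\mathcal{G}_{p^k}=\{z\in R : N(z)=1\}$ is precisely the kernel of $N$ on $R^\star$ (any $z$ with $N(z)=1$ is automatically a unit, since then $z\cdot(a-bi)=1$). The behaviour of $p$ in $\mathbb{Z}[i]$ then dictates three genuinely different situations: $p$ split ($p\equiv 1\pmod 4$), $p$ inert ($p\equiv 3\pmod 4$), and $p=2$ ramified, with the trivial case $p=2$, $k=1$ checked by hand.

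For $p\equiv 1\pmod 4$ I would write $p=\pi\bar\pi$ with $\pi,\bar\pi$ non-associate Gaussian primes and use the CRT isomorphism $\phi\colon R\xrightarrow{\sim}\mathbb{Z}/p^k\times\mathbb{Z}/p^k$, $\phi(z)=(z\bmod\pi^k,\ z\bmod\bar\pi^k)$. The key point is that conjugation swaps the two coordinates, so that if $\phi(z)=(u,v)$ then $\phi(\bar z)=(v,u)$, and hence $N(z)=z\bar z$ corresponds to $uv$. Thus $N(z)=1$ forces $v=u^{-1}$, giving $\mathcal{G}_{p^k}\cong\{(u,u^{-1}):u\in(\mathbb{Z}/p^k)^\star\}\cong(\mathbb{Z}/p^k)^\star\cong C_{p^{k-1}}\times C_{p-1}$, as claimed.

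For $p\equiv 3\pmod 4$ the prime is inert, so $R$ is local with maximal ideal $\mathfrak{m}=pR$ and residue field $\mathbb{F}_{p^2}$. Since $\lvert\mathcal{G}_{p^k}\rvert=p^{k-1}(p+1)$ (read off from $\lvert R^\star\rvert/\lvert\mathrm{Im}\,N\rvert$, or by a direct count) and $\gcd(p^{k-1},p+1)=1$, the group splits as (its Sylow $p$-subgroup) $\times$ (its prime-to-$p$ part). The prime-to-$p$ part injects, via reduction modulo $\mathfrak{m}$, into $\mathbb{F}_{p^2}^\star$ and lands in the kernel of the field norm $\mathbb{F}_{p^2}^\star\to\mathbb{F}_p^\star$, which is cyclic of order $p+1$; since its order is also $p+1$, it is the whole kernel, so this part is $C_{p+1}$. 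For the Sylow $p$-subgroup $\mathcal{G}_{p^k}\cap(1+\mathfrak{m})$ I would use that, for odd $p$, the $p$-adic logarithm is a conjugation-equivariant group isomorphism $1+\mathfrak{m}\xrightarrow{\sim}\mathfrak{m}$ (additive). Under it the condition $z\bar z=1$ becomes $x+\bar x=0$ for $x\in\mathfrak{m}=pR$, and writing $x=p(a+bi)$ this reads $2a\equiv 0$, i.e. $a\equiv 0$, so the subgroup is $\{\,pbi\,\}\cong\mathbb{Z}/p^{k-1}=C_{p^{k-1}}$, cyclic. Combining the two coprime parts gives $\mathcal{G}_{p^k}\cong C_{p^{k-1}}\times C_{p+1}$.

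The case $p=2$ is where I expect the real difficulty, and it needs a hands-on treatment. Here $2=(1+i)^2\cdot(-i)$ is ramified, $R$ is local with residue field $\mathbb{F}_2$, the norm is \emph{not} surjective (every unit has norm $\equiv 1\pmod 4$, so $\mathrm{Im}\,N$ is the subgroup of $(\mathbb{Z}/2^k)^\star$ of elements $\equiv 1\pmod 4$), and the $2$-adic logarithm fails to converge; thus none of the shortcuts above apply. The plan is first to obtain $\lvert\mathcal{G}_{2^k}\rvert$ by directly counting solutions of $a^2+b^2\equiv 1\pmod{2^k}$, namely $2$ for $k=1$ and $2^{k+1}$ for $k\ge 2$, and then to pin down the isomorphism type by locating $\mathcal{G}_{2^k}=\ker N$ inside the (known) structure of $(\mathbb{Z}[i]/2^k)^\star$, exhibiting explicit elements realizing the factors $C_4$ (for instance $i$), $C_2$, and $C_{2^{k-2}}$, and checking their orders and independence against the order count. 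The genuinely delicate point, and the main obstacle to a clean argument, is controlling the exact multiplicative orders modulo $2^k$: the $2$-adic valuation of $a^2+b^2$ behaves irregularly, so producing an element of the correct order $2^{k-2}$ and proving independence requires careful bookkeeping rather than a structural shortcut, and the small cases $k=1,2$ must be separated out.
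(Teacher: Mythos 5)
Your treatment of the two odd-prime cases is correct and, in fact, more complete than the paper's own proof, which states explicitly that it will "focus only on the case $p\equiv 3\pmod 4$" and proves nothing else. For $p\equiv 1\pmod 4$ your CRT argument (conjugation swaps the two factors $\mathbb{Z}/p^k\mathbb{Z}$, so the norm-one elements form the graph $\{(u,u^{-1})\}$ of inversion, giving $\mathcal{G}_{p^k}\cong(\mathbb{Z}/p^k\mathbb{Z})^\star$) is clean and complete; the paper omits this case entirely. For $p\equiv 3\pmod 4$ your route genuinely differs from the paper's: the paper obtains $|\mathcal{G}_{p^k}|=p^{k-1}(p+1)$ from a counting lemma of Goldman, and then proves the kernel of the reduction $\mathcal{G}_{p^k}\to\mathcal{G}_p$ is cyclic by showing its elements of order $p$ are exactly the $p-1$ elements $1+Bp^{k-1}i$, so that its $p$-torsion has order $p$; you instead split off the prime-to-$p$ part via the residue field $\mathbb{F}_{p^2}$ (landing in the norm-one subgroup, cyclic of order $p+1$) and linearize the Sylow $p$-subgroup with the $p$-adic logarithm, under which $z\bar z=1$ becomes $x+\bar x=0$. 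These are morally the same computation -- identifying the anti-invariants of conjugation -- but the paper does it only on order-$p$ elements and invokes abelian $p$-group structure, while your logarithm handles the whole $p$-part at once; your version rests on the standard convergence and equivariance of $\log$ for odd $p$, the paper's on the cited Fundamental Lemma. Two small points: your order count via $|R^\star|/|\mathrm{Im}\,N|$ silently assumes the norm is surjective onto $(\mathbb{Z}/p^k\mathbb{Z})^\star$ in the inert case (true, but it deserves the "direct count" you offer as an alternative); and in the log step $x+\bar x=0$ with $x=p(a+bi)$ gives $2pa\equiv 0\pmod{p^k}$, i.e. $a\equiv 0\pmod{p^{k-1}}$ rather than $a\equiv 0$ outright -- the conclusion $C_{p^{k-1}}$ is unaffected.

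The one genuine gap is $p=2$, $k\geq 2$: you give only a plan (count solutions of $a^2+b^2\equiv 1\pmod{2^k}$, locate $\ker N$ inside $(\mathbb{Z}[i]/2^k\mathbb{Z}[i])^\star$, exhibit independent elements of orders $2^{k-2}$, $2$ and $4$) and explicitly defer the delicate order and independence bookkeeping, so as a proof of the full proposition this case is missing. You should know, however, that the paper's proof has exactly the same omission -- it, too, proves only the inert case -- so your proposal as written covers strictly more of the statement than the published argument does.
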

\begin{proof}
We will focus only on the case $p\equiv 3 \pmod 4$. In this case, it
is well-known  that $\mathcal{G}_p\cong GF(p^2)^{\star}$. Since
$\mathcal{G}_p$ is a subgroup of $GF(p^2)^{\star}$, it must be
cyclic. Moreover, counting quadratic residues it can be seen that
$|\mathcal{G}_p|=p+1$ and, consequently, $\mathcal{G}_p\cong
C_{p+1}$.

We can now apply the Fundamental Lemma in \cite[p. 587]{GOL} to
obtain that $|\mathcal{G}_{p^{k}}|=p^{k-1}(p+1)$. This means that,
if $\Phi:\mathcal{G}_{p^k}\rightarrow \mathcal{G}_p$ is the
$\pmod{p}$ group homomorphism, then $|\textrm{Ker}\ \Phi|=p^{k-1}$.
Finally, observe that $\textrm{Ker}\ \Phi$ is an abelian $p$-group
with exactly $p-1$ elements of order $p$, namely
$\{1+Bp^{k-1}i\in\mathcal{G}_{p^{k}} : 1\leq B\leq p-1\}$.
Consequently it must be cyclic and the proof is complete in this
case.
\end{proof}

As a straightforward consequence we compute $\Phi(p^k)$ and
$\rightthreetimes(p^k)$.

\begin{cor}\label{corla2}
Let $p$ be a prime and let $k>0$ be an integer. Then
$$\Phi(p^k)=\begin{cases} 2, & \textrm{if $p=2$ and $k=1$};\\ 2^{k+1}, & \textrm{if $p=2$ and $k>1$};\\ p^{k-1}(p-1), & \textrm{if $p\equiv 1$ (mod $4$)};\\ p^{k-1}(p+1), & \textrm{if $p\equiv 3$ (mod $4$)}.\end{cases}$$
$$\rightthreetimes(p^k)=\begin{cases} 2, & \textrm{if $p=2$ and $k=1$};\\ 4, & \textrm{if $p=2$ and $k=2,3,4$};\\ 2^{k-2}, & \textrm{if $p=2$ and $k\geq 5$}; \\ p^{k-1}(p-1), & \textrm{if $p\equiv 1$ (mod $4$)};\\ p^{k-1}(p+1), & \textrm{if $p\equiv 3$ (mod $4$)}.\end{cases}$$
\end{cor}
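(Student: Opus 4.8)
The plan is to read both functions directly off the structure of $\mathcal{G}_{p^k}$ established in the preceding Proposition, using two elementary facts about a finite abelian group written as a direct product of cyclic groups $C_{d_1}\times\cdots\times C_{d_r}$: its order is the product $d_1\cdots d_r$, and its exponent is $\textrm{lcm}(d_1,\dots,d_r)$. Since $\Phi(p^k)=|\mathcal{G}_{p^k}|$ and $\rightthreetimes(p^k)=\textrm{exp}(\mathcal{G}_{p^k})$, each of the four structural cases then yields the corresponding value by a short computation, which is exactly why the statement is announced as a straightforward consequence.

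For $\Phi$, I would simply multiply the orders of the cyclic factors in each case. The cases $p\equiv 1\pmod 4$ and $p\equiv 3\pmod 4$ give $p^{k-1}(p-1)$ and $p^{k-1}(p+1)$ at once, and the case $p=2,k=1$ gives $2$. The only arithmetic worth recording is $p=2,k\geq 2$, where $|C_{2^{k-2}}\times C_2\times C_4|=2^{k-2}\cdot 2\cdot 4=2^{k+1}$, matching the stated formula.

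For $\rightthreetimes$, I would compute the lcm of the same factor orders. When $p$ is odd, the two factors $p^{k-1}$ and $p\mp 1$ are coprime (since $\gcd(p,p\mp 1)=1$), so their lcm equals their product; hence $\rightthreetimes(p^k)=\Phi(p^k)$ in both odd cases, reflecting that $\mathcal{G}_{p^k}$ is in fact cyclic there. The only point requiring care is $p=2$ with $k\geq 2$, where one must evaluate $\textrm{lcm}(2^{k-2},2,4)$. Here the factor $4$ dominates precisely when $2^{k-2}\leq 4$, i.e. when $k\in\{2,3,4\}$, giving exponent $4$; for $k\geq 5$ one has $2^{k-2}>4$ and the lcm becomes $2^{k-2}$. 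This threshold is the sole source of the three-way split in the $p=2$ branch of the formula, and it is the only step where I would expect a reader to pause; everything else is direct substitution into the structure theorem.
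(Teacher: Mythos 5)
Your proposal is correct and follows exactly the route the paper intends: the corollary is stated as a straightforward consequence of the structure theorem for $\mathcal{G}_{p^k}$, with $\Phi(p^k)$ read off as the product of the cyclic factor orders and $\rightthreetimes(p^k)$ as their least common multiple. Your case analysis, including the threshold $\textrm{lcm}(2^{k-2},2,4)=4$ for $k\in\{2,3,4\}$ versus $2^{k-2}$ for $k\geq 5$, fills in precisely the arithmetic the paper leaves to the reader.
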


For an odd prime number $p$, let us define
$\beta(p)=\left(\frac{-1}{p}\right)$ and put $\beta(2)=0$. With
this notation the following result is straightforward.

\begin{prop}\label{Phin}
$$\Phi(n)=\begin{cases} \displaystyle{2n\prod_{p|n}\left(1-\frac{\beta(p)}{p}\right)}, &
\textrm{if $4$ divides $n$}; \\
\displaystyle{n\prod_{p|n}\left(1-\frac{\beta(p)}{p}\right)}, &
\textrm{otherwise}.\end{cases}$$
\end{prop}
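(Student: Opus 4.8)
The plan is to reduce everything to the prime-power values in Corollary \ref{corla2} by exploiting the multiplicativity of $\Phi$ already recorded in the excerpt: the isomorphism $\mathcal{G}_n\cong\mathcal{G}_{p_1^{r_1}}\times\cdots\times\mathcal{G}_{p_s^{r_s}}$ yields $\Phi(mn)=\Phi(m)\Phi(n)$ whenever $\gcd(m,n)=1$. First I would record the values of $\beta$: for an odd prime $p$ one has $\beta(p)=1$ exactly when $p\equiv 1\pmod 4$ and $\beta(p)=-1$ exactly when $p\equiv 3\pmod 4$, while $\beta(2)=0$ by definition. Substituting these into the single factor $p^{r}(1-\beta(p)/p)$, I would check the two odd cases directly: for $p\equiv 1\pmod 4$ it equals $p^{r-1}(p-1)$, and for $p\equiv 3\pmod 4$ it equals $p^{r-1}(p+1)$. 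These are precisely the odd-prime-power values of $\Phi$ from Corollary \ref{corla2}, so each odd prime contributes exactly the factor appearing in the asserted product.

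The crucial observation is that, since $\beta(2)=0$, the factor attached to $p=2$ in the product is $1-0/2=1$; hence $\prod_{p\mid n}(1-\beta(p)/p)$ is literally unchanged whether or not $2$ divides $n$, and coincides with the product taken over the odd prime divisors of $n$ alone. Writing $n=2^k m$ with $m$ odd, multiplicativity gives $\Phi(n)=\Phi(2^k)\,\Phi(m)$, and by the previous paragraph $\Phi(m)=m\prod_{p\mid m}(1-\beta(p)/p)=m\prod_{p\mid n}(1-\beta(p)/p)$.

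It then remains only to track the power of $2$, and this is where the case distinction originates. When $k=0$ there is nothing to add and the formula reads $\Phi(n)=n\prod_{p\mid n}(1-\beta(p)/p)$. When $k=1$, Corollary \ref{corla2} gives $\Phi(2)=2=2^k$, so $\Phi(n)=2m\prod_{p\mid n}(1-\beta(p)/p)=n\prod_{p\mid n}(1-\beta(p)/p)$, again the ``otherwise'' case. When $k\geq 2$, however, Corollary \ref{corla2} gives $\Phi(2^k)=2^{k+1}=2\cdot 2^k$, one anomalous extra factor of $2$; this produces $\Phi(n)=2^{k+1}m\prod_{p\mid n}(1-\beta(p)/p)=2n\prod_{p\mid n}(1-\beta(p)/p)$, i.e. the ``$4$ divides $n$'' case.

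I do not expect a genuine obstacle here, since the argument is pure bookkeeping built on the already-established multiplicativity and Corollary \ref{corla2}. The only point demanding care is the treatment of the prime $2$: one must notice both that $\beta(2)=0$ removes $2$ from the product entirely, and that $\Phi(2^k)$ carries the extra factor of $2$ precisely when $k\geq 2$, which is exactly the phenomenon the case split on divisibility by $4$ encodes.
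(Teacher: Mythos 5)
Your proof is correct and is precisely the argument the paper has in mind: the paper states this proposition without proof, calling it straightforward, and the intended justification is exactly your combination of the multiplicativity $\Phi(mn)=\Phi(m)\Phi(n)$ for $\gcd(m,n)=1$ with the prime-power values of Corollary \ref{corla2}, including the observation that $\beta(2)=0$ and that $\Phi(2^k)=2^{k+1}$ for $k\geq 2$ accounts for the extra factor $2$ when $4\mid n$. Nothing is missing; your write-up simply makes explicit the bookkeeping the paper suppresses.
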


Recall that $\Phi(mn)=\Phi(m)\Phi(n)$ provided $\gcd(m,n)=1$. The following result describes the general situation.

\begin{prop}\label{prod}
Let $m,n\in\mathbb{N}$. Then

$$\Phi(nm)=\Phi(n)\Phi(m)\frac{\gcd(m,n)}{\Phi(\gcd(m,n))}.$$
\end{prop}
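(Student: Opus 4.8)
The plan is to first clear the denominator and prove the equivalent symmetric identity
\[
\Phi(nm)\,\Phi(\gcd(n,m)) = \Phi(n)\,\Phi(m)\,\gcd(n,m),
\]
which is legitimate since $\Phi(\gcd(n,m))$ is the order of a finite group and hence nonzero. Working in this product form avoids dragging a fraction through the whole argument, and it makes the eventual reduction to prime powers completely transparent.

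Next I would localize at each prime. Because $\Phi$ is multiplicative on coprime arguments (as recorded just before Proposition~\ref{Phin}), for any positive integer $N$ we have $\Phi(N)=\prod_p \Phi(p^{v_p(N)})$, where $v_p$ denotes the $p$-adic valuation and $\Phi(1)=1$. Writing $a:=v_p(n)$ and $b:=v_p(m)$, I use $v_p(nm)=a+b$, $v_p(\gcd(n,m))=\min(a,b)$, and $\gcd(n,m)=\prod_p p^{\min(a,b)}$. Matching the two sides of the displayed identity prime by prime, it then suffices to prove, for every prime $p$ and all integers $a,b\ge 0$, the local identity
\[
\Phi(p^{a+b})\,\Phi(p^{\min(a,b)}) = \Phi(p^{a})\,\Phi(p^{b})\,p^{\min(a,b)}.
\]

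The values on both sides are exactly those supplied by Corollary~\ref{corla2}, so what remains is a finite case check that is symmetric in $a$ and $b$. For an odd prime $p$, assume without loss of generality $a\le b$. The case $a=0$ is immediate, both sides reducing to $\Phi(p^b)$. When $a\ge 1$, I would substitute $\Phi(p^k)=p^{k-1}\bigl(p-\beta(p)\bigr)$ into each factor; since $\min(a,b)=a$, both sides then collapse to $p^{\,2a+b-2}\bigl(p-\beta(p)\bigr)^2$, giving equality. As $\beta(p)=\pm1$ according to $p\bmod 4$, the two residue classes $p\equiv 1$ and $p\equiv 3$ are handled by a single computation.

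The hard part will be the prime $p=2$, where $\Phi(2^k)$ does not obey one uniform formula: it jumps from $\Phi(2)=2$ to $\Phi(4)=8$ and only afterwards stabilizes as $\Phi(2^k)=2^{k+1}$. Consequently the local identity must be checked directly according to the size of $\min(a,b)$, splitting into $\min(a,b)=0$, $\min(a,b)=1$, and $\min(a,b)\ge 2$, while tracking whether the exponents $a+b$, $a$, $b$ cross the threshold at which the extra factor of $2$ (coming from $4\mid 2^k$) switches on. This threshold bookkeeping at $p=2$ is the single delicate point of the whole proof, and the small-exponent subcases are where the statement is most fragile and must be verified with the greatest care; once the explicit values of Corollary~\ref{corla2} are inserted, every odd-prime contribution is routine.
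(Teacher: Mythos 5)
Your reduction is exactly the paper's (one-line) strategy: use multiplicativity of $\Phi$ to localize at prime powers and then plug in the values from Corollary~\ref{corla2}. The odd-prime computation you outline is correct. But the step you explicitly postponed --- the ``threshold bookkeeping'' at $p=2$ --- does not in fact go through: the local identity
\[
\Phi(2^{a+b})\,\Phi(2^{\min(a,b)})=\Phi(2^{a})\,\Phi(2^{b})\,2^{\min(a,b)}
\]
fails when $a=b=1$. There the left side is $\Phi(4)\Phi(2)=8\cdot 2=16$, while the right side is $\Phi(2)\Phi(2)\cdot 2=8$. Concretely, for $m=n=2$ the proposition would give $\Phi(4)=\Phi(2)\Phi(2)\cdot\frac{2}{\Phi(2)}=4$, whereas $\Phi(4)=8$ by Corollary~\ref{corla2}; similarly $m=2$, $n=6$ gives $16$ against the true value $\Phi(12)=\Phi(4)\Phi(3)=32$. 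Checking the remaining subcases at $p=2$ (namely $\min(a,b)=0$, $a=1\le 2\le b$, and $a,b\ge 2$) shows they all hold, so the failure occurs exactly when $m\equiv n\equiv 2\pmod 4$, where an extra factor of $2$ appears on the left.

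So the gap is not in your framework but in the assertion that the final case check will close: the statement as written is false for $m\equiv n\equiv 2\pmod 4$, and no argument can complete it without either excluding that case or inserting the correction factor. The paper itself betrays this: the corollary immediately following, obtained by ``putting $m=n$'', carries an extra factor of $2$ precisely when $n\equiv 2\pmod 4$, which is incompatible with the proposition taken at face value (iterating the proposition would always give $\Phi(n^m)=n^{m-1}\Phi(n)$). The paper's own proof --- ``it is enough to consider the prime power decomposition'' --- glosses over the same point. Had you carried out the $p=2$ subcases you yourself flagged as ``most fragile,'' you would have caught this; as submitted, your proposal stops just short of the one computation that decides the matter.
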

\begin{proof}
It is enough to consider the prime power decomposition of $m$ and
$n$.
\end{proof}

In particular, if we put $m=n$ we obtain the following.

\begin{cor}
Let $n,s\in\mathbb{N}$. Then
$$\Phi(n^m)=\begin{cases} n^{m-1}\Phi(n), & \textrm{if $n\not\equiv 2$ (mod $4$)};\\ 2n^{m-1}\Phi(n) & \textrm{if $n\equiv 2$ (mod $4$)}.\end{cases}$$
\end{cor}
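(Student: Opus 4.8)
The plan is to establish both cases by induction on $m$, using Proposition \ref{prod} to relate $\Phi(n^m)$ to $\Phi(n^{m-1})$, and to treat the prime $2$ by hand at the one place where it misbehaves. The starting observation is that for every $m\geq 2$ we have $\gcd(n^{m-1},n)=n$, so Proposition \ref{prod} specializes (its $m=2$ instance being exactly the substitution $m=n$ alluded to above) to
\[
\Phi(n^m)=\Phi(n^{m-1})\,\Phi(n)\,\frac{\gcd(n^{m-1},n)}{\Phi(\gcd(n^{m-1},n))}=n\,\Phi(n^{m-1}).
\]
Were this recursion valid at every step, a trivial induction from the base case $\Phi(n)=\Phi(n)$ would give $\Phi(n^m)=n^{m-1}\Phi(n)$ in all cases.

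The main obstacle is precisely that this recursion is \emph{not} valid at every step. The extra factor of $2$ that Proposition \ref{Phin} attaches when $4\mid n^m$ is already built into $\Phi(n^{m-1})$ or $\Phi(n)$ whenever one of the factors $n^{m-1}$, $n$ is itself divisible by $4$, but it is missing when both of them are congruent to $2\pmod 4$. Since $v_2(n^{m-1})=(m-1)\,v_2(n)$, the two factors are simultaneously $\equiv 2\pmod 4$ exactly when $v_2(n)=1$ and $m=2$; that is, the only dangerous transition in the whole induction is the squaring step $n\mapsto n^2$ in the case $n\equiv 2\pmod 4$. Away from this single step the displayed recursion holds unchanged.

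I would therefore split into the two stated cases. If $n\not\equiv 2\pmod 4$, then at every level $n^{m-1}$ and $n$ have $2$-adic valuation either $0$ or at least $2$, no new factor of $4$ is produced, and the recursion $\Phi(n^m)=n\,\Phi(n^{m-1})$ holds for all $m\geq 2$; induction then yields $\Phi(n^m)=n^{m-1}\Phi(n)$. If $n\equiv 2\pmod 4$, I would compute the squaring step directly from Proposition \ref{Phin}: since $n$ and $n^2$ have the same set of prime divisors while $4\nmid n$ and $4\mid n^2$, one gets
\[
\Phi(n^2)=2n^2\!\!\prod_{p\mid n}\Bigl(1-\tfrac{\beta(p)}{p}\Bigr)=2n\cdot n\!\!\prod_{p\mid n}\Bigl(1-\tfrac{\beta(p)}{p}\Bigr)=2n\,\Phi(n).
\]
For $m\geq 3$ the factor $n^{m-1}$ is now divisible by $4$, so the clean recursion resumes and gives $\Phi(n^m)=n^{m-2}\Phi(n^2)=2n^{m-1}\Phi(n)$, as claimed. (The formula in the second branch is to be read for $m\geq 2$, the identity $\Phi(n^2)=2n\Phi(n)$ serving as the base case of this sub-induction.)
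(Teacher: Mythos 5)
Your proof is correct, and it is in fact more careful than the paper's own derivation, which consists of the single remark that the corollary follows from Proposition \ref{prod} ``if we put $m=n$''. Taken literally, that substitution gives $\Phi(n^2)=n\Phi(n)$ unconditionally and can never produce the factor $2$ in the second branch; the reason is that Proposition \ref{prod} as stated is itself false precisely when both of its arguments are $\equiv 2\pmod 4$ (for instance $m=n=2$ gives $\Phi(2)\Phi(2)\cdot 2/\Phi(2)=4$, while $\Phi(4)=8$ by Corollary \ref{corla2}), and that is exactly the case responsible for the extra factor $2$. Your argument is the correct repair: you use the valid instances of Proposition \ref{prod} for all the inductive steps, you isolate the unique failing transition (the squaring step when $n\equiv 2\pmod 4$), and you settle that step directly from Proposition \ref{Phin}. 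Two further points in your favour: your remark that the second branch must be read with $m\geq 2$ fixes a genuine edge case (at $m=1$ it would assert $\Phi(n)=2\Phi(n)$), and your closing computation points to the shortest possible proof --- since $n^m$ and $n$ have the same prime divisors, both branches follow in one line each from Proposition \ref{Phin} alone, by comparing when $4$ divides $n^m$ versus $n$, with no induction and no appeal to Proposition \ref{prod} at all.
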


\begin{prop}
Let $m,n\in\mathbb{N}$. If $d=\textrm{gcd}(m,n)$ and $M= \textrm{lcm}(m,n)$, then:
$$\Phi(d)\Phi(M)=\Phi(m)\Phi(n).$$
\end{prop}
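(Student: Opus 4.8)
The plan is to work directly from the explicit formula in Proposition~\ref{Phin}, separating $\Phi$ into three multiplicative pieces. Writing $g(n):=\prod_{p\mid n}\left(1-\frac{\beta(p)}{p}\right)$ and letting $\epsilon(n):=2$ when $4\mid n$ and $\epsilon(n):=1$ otherwise, Proposition~\ref{Phin} reads $\Phi(n)=\epsilon(n)\,n\,g(n)$. The identity to be proved then factors as the product of three separate identities:
$$\epsilon(d)\epsilon(M)=\epsilon(m)\epsilon(n),\qquad dM=mn,\qquad g(d)g(M)=g(m)g(n).$$

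The middle identity $dM=mn$ is the classical relation between a gcd and an lcm. For the last identity, I would note that $g(n)$ depends only on the \emph{set} of primes dividing $n$, and that a prime $p$ divides $M=\mathrm{lcm}(m,n)$ exactly when $p\mid m$ or $p\mid n$, whereas $p\mid d=\gcd(m,n)$ exactly when $p\mid m$ and $p\mid n$. Hence, factor by factor over the primes, the contributions coming from $\{p:p\mid d\}$ together with $\{p:p\mid M\}$ reproduce precisely those coming from $\{p:p\mid m\}$ together with $\{p:p\mid n\}$, so $g(d)g(M)=g(m)g(n)$.

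The only genuinely delicate point --- and the main obstacle --- is the first identity, involving the correction factor $\epsilon$. Here I would look at the $2$-adic valuations $a=v_2(m)$ and $b=v_2(n)$, so that $v_2(d)=\min(a,b)$ and $v_2(M)=\max(a,b)$. Since $\epsilon(x)=2$ precisely when $v_2(x)\geq 2$, the claim $\epsilon(d)\epsilon(M)=\epsilon(m)\epsilon(n)$ reduces to the Boolean identity
$$[\min(a,b)\geq 2]+[\max(a,b)\geq 2]=[a\geq 2]+[b\geq 2],$$
which holds because $\min(a,b)\geq 2$ means both $a\geq 2$ and $b\geq 2$, while $\max(a,b)\geq 2$ means at least one of them does. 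Multiplying the three verified identities together then gives
$$\Phi(d)\Phi(M)=\epsilon(d)\epsilon(M)\,dM\,g(d)g(M)=\epsilon(m)\epsilon(n)\,mn\,g(m)g(n)=\Phi(m)\Phi(n),$$
completing the proof.
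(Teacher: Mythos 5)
Your proof is correct, and it takes a genuinely different route from the paper's. The paper derives the identity from Proposition \ref{prod} (the formula $\Phi(nm)=\Phi(n)\Phi(m)\gcd(m,n)/\Phi(\gcd(m,n))$), writing $M=n\cdot\frac{m}{d}$ and appealing to coprimality reductions, whereas you work straight from the closed formula of Proposition \ref{Phin}, factoring $\Phi(n)=\epsilon(n)\,n\,g(n)$ and verifying the identity factor by factor. Your route buys robustness: the three identities you check hold unconditionally, and the delicate one involving the correction factor $\epsilon$ is immediate because $\{\min(a,b),\max(a,b)\}$ is merely a rearrangement of $\{a,b\}$. By contrast, the paper's supporting Proposition \ref{prod} actually fails when $m\equiv n\equiv 2\pmod 4$ (for instance $\Phi(4)=8$ while $\Phi(2)\Phi(2)\cdot 2/\Phi(2)=4$), and the coprimality claim $\gcd(n,m/d)=1$ made in the paper's proof is false in general (take $m=12$, $n=18$, $d=6$). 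The gcd--lcm statement itself is nevertheless true without exception, and your argument isolates exactly why: passing from the pair $\{m,n\}$ to $\{d,M\}$ only permutes the two $2$-adic valuations, whereas passing to the product $mn$ adds them, which is what breaks the product formula in the exceptional case. So your proof is not only valid but also closes a gap that the paper's own argument glosses over.
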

\begin{proof}
Recall that $M=n\frac{m}{d}$, where $\gcd(n,\frac{m}{d})=1$ and we can assume, without loss of generality, that $\gcd(m,d)=1$. Then,
Proposition \ref{prod} leads to:
$$\Phi(M)=\Phi\left(n\frac{m}{d}\right)=\Phi(n)\Phi\left(\frac{m}{d}\right))\Phi(n)\frac{\Phi(m)}{\Phi(d)}$$
and the result follows.
\end{proof}

Recall that for the classical Euler and Carmichael functions, $\phi(n)=\lambda(n)$ if and only if $n=2$, $n=4$ or $n=p^r,2p^r$ for some odd prime $p$ and $r>0$. Note that in all these cases the group
$(\mathbb{Z}/n\mathbb{Z})^\star$ is cyclic. For our recently defined
functions $\Phi$ and $ \rightthreetimes$ we have the following:

\begin{prop} 
$\Phi(n)=\rightthreetimes(n)$ if and only if $n=2$ or $n=p^r$ for some odd prime $p$ and $r>0$.
\end{prop}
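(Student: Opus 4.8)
The plan is to recast the identity $\Phi(n)=\rightthreetimes(n)$ as a structural condition on $\mathcal{G}_n$ and then read off the answer from the Proposition describing $\mathcal{G}_{p^k}$. The key observation is that for any finite abelian group $G$ the exponent $\exp(G)$ always divides the order $|G|$, with equality if and only if $G$ is cyclic: writing $G$ in invariant-factor form $C_{d_1}\times\cdots\times C_{d_t}$ with $d_1\mid\cdots\mid d_t$, one has $\exp(G)=d_t$ and $|G|=d_1\cdots d_t$, so the two coincide exactly when $t=1$. Applied to $G=\mathcal{G}_n$, this shows that $\Phi(n)=\rightthreetimes(n)$ holds precisely when $\mathcal{G}_n$ is cyclic, and it remains to decide for which $n$ this happens.

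For the sufficiency direction I would simply exhibit cyclicity for the two listed families. If $n=2$ then $\mathcal{G}_2\cong C_2$ is cyclic. If $n=p^r$ with $p$ an odd prime, the structure Proposition gives $\mathcal{G}_{p^r}\cong C_{p^{r-1}}\times C_{p-1}$ when $p\equiv 1\pmod 4$ and $\mathcal{G}_{p^r}\cong C_{p^{r-1}}\times C_{p+1}$ when $p\equiv 3\pmod 4$; in both cases the two cyclic factors have coprime orders, since $p$ divides neither $p-1$ nor $p+1$, so their product is cyclic.

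For necessity I would argue by contraposition, using the decomposition $\mathcal{G}_n\cong\mathcal{G}_{p_1^{r_1}}\times\cdots\times\mathcal{G}_{p_s^{r_s}}$ together with the standard fact that such a product is cyclic if and only if every factor is cyclic and the factor orders are pairwise coprime. If $4\mid n$, then $\mathcal{G}_{2^k}$ occurs with $k\geq 2$ and equals $C_{2^{k-2}}\times C_2\times C_4$, which is not cyclic; hence neither is $\mathcal{G}_n$. Otherwise every factor is cyclic, and the decisive point is that each factor attached to an odd prime has even order: by Corollary \ref{corla2}, $\Phi(p^r)=p^{r-1}(p-1)$ or $p^{r-1}(p+1)$, and in either case $p\pm 1\equiv 0\pmod 4$, while also $\Phi(2)=2$ is even. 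Therefore, as soon as $n$ carries two distinct odd prime factors, or has the form $2p^r$ with $p$ odd, at least two factors share the common divisor $2$ in their orders, so the product is not cyclic. The cases $4\mid n$, $n$ having two distinct odd primes, and $n=2p^r$ exhaust every $n$ outside the families $n=2$ and $n=p^r$, completing the characterisation.

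I do not anticipate a genuine obstacle here, since the argument is a direct unwinding of the structure theorem; the only step requiring care is making the case analysis in the necessity direction exhaustive, in particular isolating the single-odd-prime case $n=2p^r$ (which is not covered by the ``two distinct odd primes'' argument) and checking that it fails coprimality through the two even-order factors $\mathcal{G}_2$ and $\mathcal{G}_{p^r}$.
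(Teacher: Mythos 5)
Your proof is correct, but it is organized differently from the paper's. The paper's one-line proof is purely arithmetic: it cites Corollary~\ref{corla2} for the explicit values $\Phi(p^k)$ and $\rightthreetimes(p^k)$ together with the rules $\Phi(mn)=\Phi(m)\Phi(n)$ and $\rightthreetimes(mn)=\textrm{lcm}(\rightthreetimes(m),\rightthreetimes(n))$ for coprime $m,n$; the two values agree at $n=2$ and at odd prime powers, disagree at $2^k$ for $k\ge 2$, and disagree whenever $n$ has at least two prime-power factors, because every local value is even and hence the lcm falls strictly below the product. You instead convert the equality $\Phi(n)=\rightthreetimes(n)$ into the statement that $\mathcal{G}_n$ is cyclic (via invariant factors of a finite abelian group) and then decide cyclicity from the structure proposition for $\mathcal{G}_{p^k}$ together with the fact that a direct product of cyclic groups is cyclic if and only if the factor orders are pairwise coprime. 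The substance---reduction to prime powers plus the same parity obstruction---is identical, but your organizing lemma is different and more conceptual: it makes explicit the analogy the paper itself mentions just before the proposition, namely that the classical equality $\phi(n)=\lambda(n)$ occurs precisely when $(\mathbb{Z}/n\mathbb{Z})^\star$ is cyclic. The paper's route is shorter given that Corollary~\ref{corla2} is already in place; yours explains structurally why the answer takes the form it does. Two details you handled that deserve the care you gave them: the restriction to \emph{abelian} groups in the order-equals-exponent step is genuinely necessary (for $S_3$ the order and the exponent are both $6$ yet the group is not cyclic), and the case split in the necessity direction ($4\mid n$; two distinct odd prime factors; $n=2p^r$) is exhaustive, so there is no gap.
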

\begin{proof} 
Just apply Corollary \ref{corla2} and recall that if $\gcd(m,n)=1$, then $\Phi(mn)=\Phi(m)\Phi(n)$ while
$\rightthreetimes(mn)=\textrm{lcm}(\rightthreetimes(m),\rightthreetimes(n))$.
\end{proof}

We end this section showing that the asymptotic behavior of
$\Phi(n)$ is not exactly the same as that of his classical counterpart.

\begin{prop}
$$\lim \inf \frac{\phi(n)}{n}=\lim \inf \frac{\Phi(n)}{n}=0$$
$$1=\lim \sup \frac{\phi(n)}{n} \neq \lim \sup \frac{\Phi(n)}{n}=\infty$$
\end{prop}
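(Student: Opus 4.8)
The plan is to reduce everything to the multiplicative formulas already available and then to feed in the divergence of the sums of prime reciprocals in the two residue classes modulo $4$. For the classical function I would use $\phi(n)/n=\prod_{p\mid n}(1-1/p)$; for the Gaussian one I would use Proposition \ref{Phin}, writing
$$\frac{\Phi(n)}{n}=\varepsilon(n)\prod_{p\mid n}\left(1-\frac{\beta(p)}{p}\right),$$
where $\varepsilon(n)\in\{1,2\}$ equals $2$ exactly when $4\mid n$, and $\beta(p)=\left(\frac{-1}{p}\right)$ for odd $p$ with $\beta(2)=0$. The key observation is that the local factor equals $1-1/p$ when $p\equiv 1\pmod 4$, equals $1+1/p$ when $p\equiv 3\pmod 4$, and equals $1$ when $p=2$. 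So to push $\Phi(n)/n$ down I should use primes of the first kind, and to push it up I should use primes of the second kind.

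The two classical statements are then immediate. Since $\phi(n)/n\le 1$ while $\phi(p)/p=1-1/p\to 1$ along the primes, the limsup equals $1$; and evaluating at the primorials $n=\prod_{p\le x}p$ gives $\phi(n)/n=\prod_{p\le x}(1-1/p)$, which tends to $0$ because $\sum_p 1/p$ diverges. This settles both the liminf and the limsup of $\phi(n)/n$.

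For the Gaussian function I would choose test sequences in which only one type of local factor appears. To obtain $\liminf\Phi(n)/n=0$, take $n_k$ to be the product of the first $k$ primes congruent to $1\pmod 4$; then $\varepsilon(n_k)=1$ and
$$\frac{\Phi(n_k)}{n_k}=\prod_{\substack{p\le x\\ p\equiv 1\,(4)}}\left(1-\frac1p\right),$$
which tends to $0$ provided $\sum_{p\equiv 1(4)}1/p=\infty$. Symmetrically, to obtain $\limsup\Phi(n)/n=\infty$, take $n_k$ to be the product of the first $k$ primes congruent to $3\pmod 4$, so that
$$\frac{\Phi(n_k)}{n_k}=\prod_{\substack{p\le x\\ p\equiv 3\,(4)}}\left(1+\frac1p\right),$$
which diverges to $\infty$ provided $\sum_{p\equiv 3(4)}1/p=\infty$. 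Together with the trivial bound $\Phi(n)/n>0$, this yields both assertions for $\Phi$.

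The main obstacle, and really the only nontrivial input, is the divergence of the sums of reciprocals of primes in each of the classes $1$ and $3$ modulo $4$. Knowing merely that $\sum_p 1/p$ diverges does not suffice, since one would still have to exclude the possibility that a single class carries all the mass; instead I would invoke Dirichlet's theorem on primes in arithmetic progressions in its quantitative (Mertens-type) form, $\sum_{p\le x,\ p\equiv a\,(q)}1/p\sim\frac{1}{\varphi(q)}\log\log x$ for $\gcd(a,q)=1$, which gives divergence for both classes at once. The elementary comparison $\sum_p\log(1\pm 1/p)=\pm\sum_p 1/p+O\!\left(\sum_p 1/p^2\right)$ then converts these divergences into $\prod(1-1/p)\to 0$ and $\prod(1+1/p)\to\infty$, completing the argument.
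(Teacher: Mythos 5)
Your proposal is correct and follows essentially the same route as the paper: both use products of primes $\equiv 1 \pmod 4$ to drive $\Phi(n)/n$ to $0$ and products of primes $\equiv 3 \pmod 4$ to drive it to $\infty$, with the divergence of $\sum 1/p$ in each residue class (the strong form of Dirichlet's theorem) as the key input. The only cosmetic differences are that the paper cites Hardy--Wright for the classical $\phi$ statements rather than proving them, and it converts the divergent sums into product estimates via $\prod(1+1/p)\geq 1+\sum 1/p$ and $\prod(1-1/p)\leq e^{-\sum 1/p}$ instead of your logarithmic comparison.
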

\begin{proof}
For the asymptotic growth of Euler $\phi$ function and its limits
see \cite{HW}.

Now consider sequences $\{S_n\}$ and $\{L_n\}$ given by:
$$
S_n:= \prod_{\substack{p\leq n\\ p \equiv 3 \pmod 4}}p,  \qquad
L_n:= \prod_{\substack{p\leq n\\ p \equiv 1 \pmod 4}}p.
$$

We have that $\Phi(p)=p+1$ for every odd prime $p \equiv 3 \pmod 4$,
hence
$$
\lim_{n\rightarrow\infty}\frac{\Phi(S_n)}{S_n}=\lim_{n\rightarrow\infty}
\prod_{\substack{p\leq n\\ p \equiv 3 \pmod 4}}
\frac{p+1}{p}=\infty,
$$
since $\prod \frac{p+1}{p}\geq 1+\sum 1/p$ and this series is
divergent by the strong form of Dirichlet's theorem. On the other
hand,
$$
\lim_{n\rightarrow\infty}\frac{\Phi(L_n)}{L_n}=
\lim_{n\rightarrow\infty}\prod_{\substack{p\leq n\\ p \equiv 1 \pmod 4}}\bigl(1-\frac{1}{p}\bigr).$$

Moreover, $$0\leq \prod_{\substack{p\leq n\\ p \equiv 1 \pmod 4}}\bigl(1-\frac{1}{p}\bigr) \leq \prod_{\substack{p\leq n\\ p \equiv 1 \pmod
4}} e^{-1/p}= e^{-\sum \frac{1}{p}},$$

where the sum in the exponent is taken over the primes $p\equiv 1 \pmod 4,\,  p\leq
n$. Again, by the strong form of Dirichlet's theorem, this fuction
tends to 0 and result holds.
\end{proof}

\section{Gaussian Fermat pseudoprimes}

We start this section introducing the arithmetic function
$\mathcal{F}$, which will play the same role as $n-1$ plays in the classical
setting.

$$
\mathcal{F}(n)=\begin{cases} n-1, & \textrm{if $n\equiv 1$ (mod $4$)};\\
n+1, & \textrm{if $n\equiv 3$ (mod $4$)};\\
n, & \textrm{otherwise}.\end{cases}
$$

Note that, if $n$ is prime, $\mathcal{F}(n)=|\mathcal{G}_n|$.

We present the analogue to Fermat's little theorem in this gaussian setting.

\begin{prop}\label{FermatGaussiano}
Let $p$ be a prime number and let $z$ be a gaussian integer such
that $p$ is coprime with $z \overline{z}$. Then:
\begin{itemize}
\item[i)]
$\left(\displaystyle{\frac{z}{\overline{z}}}\right)^{\mathcal{F}(p)}\equiv
1 \pmod{p}.$
\item[ii)] $Im(z^{\mathcal{F}(p)})\equiv 0 \pmod{p}.$
\end{itemize}
\end{prop}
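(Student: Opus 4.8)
The plan is to reduce everything to Lagrange's theorem applied to the group $\mathcal{G}_p$, exploiting the fact (recorded just before the statement) that $\mathcal{F}(p)=|\mathcal{G}_p|$ for every prime $p$. The object that will do the work is the ratio $w:=z/\overline{z}$, which I first need to make sense of inside $\mathcal{I}_p=\mathbb{Z}[i]/p\mathbb{Z}[i]$. Since complex conjugation $a+bi\mapsto a-bi$ preserves the ideal $p\mathbb{Z}[i]$, it descends to a ring automorphism of $\mathcal{I}_p$; in particular $\overline{u^{\,k}}=\overline{u}^{\,k}$ for every $u\in\mathcal{I}_p$. The hypothesis that $p$ is coprime to $z\overline{z}=a^2+b^2$ says exactly that $a^2+b^2$ is a unit in $\mathbb{Z}/p\mathbb{Z}$, and from $\overline{z}\cdot z=a^2+b^2$ it follows that $\overline{z}$ is invertible in $\mathcal{I}_p$. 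Hence $w=z\,\overline{z}^{-1}$ is a well-defined element of $\mathcal{I}_p$.

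Next I would check that $w$ lands in $\mathcal{G}_p$. Applying the conjugation automorphism gives $\overline{w}=\overline{z}\,z^{-1}$, so that $w\overline{w}=(z\,\overline{z}^{-1})(\overline{z}\,z^{-1})=1$; writing $w=a'+b'i$ this is precisely the condition $a'^2+b'^2\equiv 1\pmod p$ defining membership in $\mathcal{G}_p$. Now part (i) is immediate: by Lagrange's theorem the order of $w$ divides $|\mathcal{G}_p|$, and since $\mathcal{F}(p)=|\mathcal{G}_p|=\Phi(p)$ in each of the cases $p=2$, $p\equiv 1$ and $p\equiv 3\pmod 4$ (Corollary \ref{corla2}), we conclude $w^{\mathcal{F}(p)}=1$, i.e. $(z/\overline{z})^{\mathcal{F}(p)}\equiv 1\pmod p$.

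For part (ii) I would feed (i) back through the conjugation map. Clearing the denominator in (i) gives $z^{\mathcal{F}(p)}\equiv\overline{z}^{\,\mathcal{F}(p)}\pmod p$, and since conjugation is multiplicative the right-hand side equals $\overline{z^{\mathcal{F}(p)}}$. Writing the Gaussian integer $z^{\mathcal{F}(p)}=A+Bi$ with $A,B\in\mathbb{Z}$, the congruence $A+Bi\equiv A-Bi\pmod p$ collapses to $2B\equiv 0\pmod p$. When $p$ is odd, $2$ is invertible modulo $p$, whence $p\mid B=\mathrm{Im}(z^{\mathcal{F}(p)})$, as desired.

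The only case this last step does not settle is $p=2$, where $2B\equiv 0\pmod 2$ is vacuous; the main (and only, minor) obstacle is therefore to dispatch $p=2$ by hand. Here $\mathcal{F}(2)=2$ and a direct expansion $z^2=(a^2-b^2)+2ab\,i$ shows $\mathrm{Im}(z^2)=2ab\equiv 0\pmod 2$, so (ii) holds there too. I expect no genuine difficulty beyond this bookkeeping, since the heart of the argument is simply the observation that a norm-one ratio sits inside $\mathcal{G}_p$, combined with the order identity $\mathcal{F}(p)=|\mathcal{G}_p|$.
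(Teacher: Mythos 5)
Your proof is correct and follows essentially the same route as the paper's: the key observation in both is that $z/\overline{z}$ lies in $\mathcal{G}_p$, so Lagrange's theorem together with $|\mathcal{G}_p|=\mathcal{F}(p)$ (Corollary \ref{corla2}) yields part i). The paper dispatches part ii) by merely remarking that i) and ii) are equivalent, whereas you spell out the implication i) $\Rightarrow$ ii) via conjugation and handle $p=2$ separately --- a welcome level of detail, but not a different argument.
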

\begin{proof}
Note that if $z\in\mathbb{Z}[i]$ is such that $\gcd(n,z\overline{z})=1$, then $z/\overline{z}\in\mathcal{G}_n$. Hence, it is enough to apply Corollary \ref{corla2}.
\end{proof}

\begin{rem}
Both conditions in Proposition \ref{FermatGaussiano} are equivalent.
\end{rem}

We can consider the above result as a
compositeness test for integers: if for some integer $n$ we find
a gaussian integer $z$ such that either condition i) or ii) does not hold,
then $n$ is a composite number. Nevertheless, like in the classical setting, the converse is not always true. This fact motivates
the following definition:

\begin{defi}
A composite integer $n$ is called a Gaussian Fermat pseudoprime
(GFP) with respect to the base $z \in \mathbb{Z}[i]$ if
$\gcd(n,z\overline{z})=1$ and condition i) (or equivalently ii)) from
Proposition \ref{FermatGaussiano} holds for $n$.
\end{defi}

In the classical setting the choice of different basis leads, in general, to different sets of associated Fermat pseudoprimes. In our case it is easy to describe a family of different basis leading to the same set of associated Gaussian Fermat pseudoprimes.

\begin{prop}
Let $z,w$ be two gausian integers such that  $|z|=|w|$. Then an integer $n$ is a Gaussian
Fermat pseudoprime with respect to $z$ if and only if $n$ is a Gaussian
Fermat pseudoprimes with respect to $w$.
\end{prop}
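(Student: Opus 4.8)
The plan is to collapse the statement to a single congruence and then exploit the multiplicative structure of $\mathcal{G}_n$. First I would dispose of the coprimality clause. Since $|z|=|w|$ means $z\overline{z}=w\overline{w}$ as ordinary integers, we have $\gcd(n,z\overline{z})=\gcd(n,w\overline{w})$, so the hypothesis $\gcd(n,z\overline{z})=1$ holds for $z$ exactly when it holds for $w$, and in that case both $z/\overline{z}$ and $w/\overline{w}$ lie in $\mathcal{G}_n$ (each has norm $1$). By the definition of $\mathcal{F}$ and of a GFP, $n$ is a GFP with respect to $z$ if and only if the order of $z/\overline{z}$ in $\mathcal{G}_n$ divides $\mathcal{F}(n)$, and likewise for $w$. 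Thus the whole statement reduces to comparing the orders of $z/\overline{z}$ and $w/\overline{w}$ against $\mathcal{F}(n)$.

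The cleanest case is when $w$ is obtained from $z$ by a unit. If $w=uz$ with $u\in\{\pm1,\pm i\}$, then $w/\overline{w}=(u/\overline{u})(z/\overline{z})$, while if $w=u\overline{z}$ then $w/\overline{w}=(u/\overline{u})(z/\overline{z})^{-1}$. A short computation gives $u/\overline{u}\in\{1,-1\}$, and since $\mathcal{F}(n)$ is even for every $n\ge 2$ one has $(-1)^{\mathcal{F}(n)}=1$; hence $(w/\overline{w})^{\mathcal{F}(n)}=(z/\overline{z})^{\pm\mathcal{F}(n)}$, which is $\equiv 1$ precisely when $(z/\overline{z})^{\mathcal{F}(n)}\equiv 1$. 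So the two tests coincide whenever $z$ and $w$ are associate, or associate to each other's conjugate. For arbitrary same-modulus bases the natural reduction is to write $z/\overline{z}=q\,(w/\overline{w})$ with $q:=z\overline{w}/\overline{z\overline{w}}\in\mathcal{G}_n$, so that the equivalence would follow from $q^{\mathcal{F}(n)}\equiv 1\pmod n$.

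The hard part will be establishing this last congruence, and here I expect the real obstruction to lie. In $\mathbb{Z}[i]$ equality of norms does \emph{not} force $z$ and $w$ to be associate even up to conjugation: as soon as $|z|^2$ has a prime factor $\equiv 1\pmod 4$ there are genuinely inequivalent bases of the same modulus. For such pairs $q$ need not be a root of unity of small order, and $q^{\mathcal{F}(n)}\equiv 1$ can fail; indeed a direct check on the norm-$25$ bases $5$ and $3+4i$ at $n=9$ already makes the two tests diverge. The decisive step is therefore to control the order of $q$, which I would approach through the factorization $\mathcal{G}_n\cong\prod_{p^k\|n}\mathcal{G}_{p^k}$ together with Corollary \ref{corla2}, reducing matters to deciding when $z\overline{w}/\overline{z\overline{w}}$ lands in the subgroup of $\mathcal{G}_{p^k}$ annihilated by $\mathcal{F}(n)$. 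This localization is where I anticipate the bulk of the work, and possibly the need to restrict $w$ to the unit-and-conjugate orbit of $z$ rather than merely imposing $|z|=|w|$.
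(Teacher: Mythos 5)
Your proposal, read as a referee report on the proposition rather than as a proof of it, is correct: the statement as printed is \emph{false}, and your norm-$25$ counterexample is valid. Take $n=9$, $z=5$, $w=3+4i$, so that $|z|=|w|=5$ and $\gcd(9,z\overline{z})=\gcd(9,25)=1$. Since $z$ is real, $z/\overline{z}\equiv 1\pmod 9$, hence $(z/\overline{z})^{\mathcal{F}(9)}=1^{8}\equiv 1\pmod 9$ and the composite number $9$ is a GFP to base $5$. On the other hand, $w/\overline{w}=(3+4i)^{2}/25\equiv 4(-7+24i)\equiv 8+6i\pmod 9$, and repeated squaring gives $(8+6i)^{2}\equiv 1+6i$, $(8+6i)^{4}\equiv 1+3i$, $(8+6i)^{8}\equiv 1+6i\not\equiv 1\pmod 9$; indeed $8+6i$ has order $6$ in the cyclic group $\mathcal{G}_9\cong C_{12}$, and $6\nmid 8=\mathcal{F}(9)$. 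So $9$ is not a GFP to base $3+4i$. Nor is the failure an artifact of allowing the degenerate real base: the norm-$65$ bases $z=8+i$ and $w=4+7i$, which are not associates of each other nor of each other's conjugates, also separate at $n=9$, since $z/\overline{z}\equiv 8i$ has order $4$ there while $w/\overline{w}\equiv 6+i$ has order $12$.

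The genuine gap is in the paper's own proof, and it sits exactly at the step your proposal isolates as ``the hard part.'' The paper argues: if $n$ is a GFP to base $z$, then $(z/\overline{z})^{\mathcal{F}(n)}\equiv 1\pmod n$; \emph{hence} $\rightthreetimes(n)\mid\mathcal{F}(n)$; hence $(w/\overline{w})^{\mathcal{F}(n)}\equiv 1\pmod n$ because $w/\overline{w}\in\mathcal{G}_n$. The middle inference is invalid: from $g^{\mathcal{F}(n)}=1$ for a single element $g\in\mathcal{G}_n$ one can conclude only that $\mathrm{ord}(g)$ divides $\mathcal{F}(n)$, not that the exponent of the whole group does. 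By Proposition \ref{car2}, the condition $\rightthreetimes(n)\mid\mathcal{F}(n)$ is equivalent to $n$ being a $G$-Carmichael number, which is far stronger than being a pseudoprime to the one base $z$; in the counterexample above, $z/\overline{z}=1$ has order $1$ while $\rightthreetimes(9)=12\nmid 8$. What survives is precisely the part you prove rigorously: if $w=uz$ or $w=u\overline{z}$ with $u\in\{\pm 1,\pm i\}$, then $w/\overline{w}=\pm(z/\overline{z})^{\pm 1}$, and since $\mathcal{F}(n)$ is always even, $n$ is a GFP to base $w$ if and only if it is a GFP to base $z$. The proposition should be restated with that hypothesis (that $w$ lie in the unit-and-conjugate orbit of $z$) in place of the hypothesis $|z|=|w|$, exactly as you suggest in your closing sentence.
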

\begin{proof}
Assume that $n$ is a GFP with respect to $z$. Then $\gcd(n,z\overline{z})=1$ and $(z/\overline{z})^{\mathcal{F}(n)}\equiv 1\pmod{n}$. Now, since $|w|=|z|$ we have that $\gcd(n,w\overline{w})=\gcd(n,z\overline{z})=1$. Moreover, since $(z/\overline{z})^{\mathcal{F}(n)}\equiv 1\pmod{n}$ and $z/\overline{z}\in\mathcal{G}_n$ it follows that $\rightthreetimes(n)\mid \mathcal{F}(n)$. Hence, $(w/\overline{w})^{\mathcal{F}(n)}\equiv 1\pmod{n}$ because $w/\overline{w}\in\mathcal{G}_n$. The converse is clear since the roles of $z$ and $w$ are symmetric and the proof is complete.
\end{proof}

\section{Gaussian Carmichael and cyclic numbers}

An integer $n$ that is a Fermat pseudoprime for all bases  coprime
to $n$ is called a Carmichael number \cite{CARMI}. In the gaussian
case there also exists composite numbers which are GFP with respect
all bases.

\begin{defi}
A composite number $n\in\mathbb{N}$ is a Gaussian Carmichael number
($G-$Carmichael)  if it is a GFP to base $z$ for every gaussian
integer $z$ such that $n$ is coprime to $z \overline{z}$.
\end{defi}

An alternative and equivalent definition of Carmichael numbers is
given by Korselt's criterion \cite{KORSELT} which states that a
positive composite integer $n$ is a Carmichael number if and only if
$n$ is square-free, and for every prime divisor $p$ of $n$, $p-1$
divides $n-1$. It follows from this characterization that all Carmichael numbers are
odd. A similar  characterization of $G-$Carmichael numbers can be given,
showing that there are also even $G-$Carmichael numbers.

\begin{prop}\label{car2}
For every composite integer $n$ the following are equivalent.
\begin{itemize}
\item[i)] $n$ is $G-$Carmichael number.
\item[ii)] $\rightthreetimes(n)$ divides $\mathcal{F}(n)$.
\item[iii)] For every prime divisor $p$ of $n$, $\mathcal{F}(p)$ divides $\mathcal{F}(n)$ and one of the following conditions holds:
\begin{itemize}
\item[a)] $n$ is odd and square-free,
\item[b)] $n$ is multiple of $4$ and  $\frac{n}{4}=2,3,5$ or not a prime number.
\end{itemize}
\end{itemize}
\end{prop}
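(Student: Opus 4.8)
The plan is to run the equivalences as $(ii)\Rightarrow(i)$, $(i)\Rightarrow(ii)$ and $(ii)\Leftrightarrow(iii)$, putting the bulk of the work in the last step. First I would recast $(i)$ in group-theoretic terms. A Gaussian integer $z$ is an admissible base exactly when it reduces to a unit of $\mathcal{I}_n$, and then $z/\overline{z}$ has norm $1$, so $z/\overline{z}\in\mathcal{G}_n$. As $z$ ranges over $\mathcal{I}_n^\star$ the elements $z/\overline{z}$ describe the image $H_n$ of the homomorphism $\psi\colon\mathcal{I}_n^\star\to\mathcal{G}_n$, $\psi(z)=z/\overline{z}$, whose kernel is the subgroup of conjugation-fixed units. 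Thus $(i)$ says precisely that $\exp(H_n)\mid\mathcal{F}(n)$. Since $H_n\leq\mathcal{G}_n$ gives $\exp(H_n)\mid\rightthreetimes(n)$, the implication $(ii)\Rightarrow(i)$ is immediate.

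For $(i)\Rightarrow(ii)$ the first thing one must be careful about is that $\psi$ is \emph{not} surjective when $2\mid n$: for a base $z=x+yi$ the imaginary part of $z/\overline{z}$ is $2xy/(x^2+y^2)$, which is always divisible by $4$, so $H_{2^k}\subsetneq\mathcal{G}_{2^k}$ and one cannot simply claim the bases exhaust $\mathcal{G}_n$. What does hold is surjectivity of $\psi$ on every odd prime power, which I would check by a short order count against Corollary \ref{corla2} and the known value of $|\mathcal{I}_{p^k}^\star|$, reducing to prime powers through the CRT decomposition of $\mathcal{G}_n$. Writing $n=2^k m$ with $m$ odd, reduction modulo $m$ then carries $H_n$ onto $\mathcal{G}_m$, forcing $\rightthreetimes(m)\mid\exp(H_n)$. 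Combining this with the elementary observation that $\rightthreetimes(2^k)\mid\mathcal{F}(n)$ holds automatically in each class of $n$ modulo $4$ (when $4\mid n$ one has $\rightthreetimes(2^k)\mid 2^k\mid n=\mathcal{F}(n)$, and when $n\equiv 2$ one only needs $2\mid\mathcal{F}(n)=n$), the hypothesis $\exp(H_n)\mid\mathcal{F}(n)$ yields $\rightthreetimes(n)=\mathrm{lcm}(\rightthreetimes(2^k),\rightthreetimes(m))\mid\mathcal{F}(n)$, which is $(ii)$. Note this bypasses $\exp(H_{2^k})$ entirely.

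For $(ii)\Leftrightarrow(iii)$ I would argue by cases on $n\bmod 4$, using $\rightthreetimes(n)=\mathrm{lcm}_{p^k\|n}\rightthreetimes(p^k)$ and the values $\rightthreetimes(p^k)=p^{k-1}\mathcal{F}(p)$ for odd $p$ from Corollary \ref{corla2}. When $n$ is odd, $\mathcal{F}(n)=n\mp1$ is coprime to $n$, so the factor $p^{k-1}$ can divide $\mathcal{F}(n)$ only if $k=1$; this forces $n$ square-free and reduces $(ii)$ to $\mathcal{F}(p)\mid\mathcal{F}(n)$ for all $p\mid n$, which is case (a). When $n\equiv2\pmod4$ is composite, any odd prime factor $p$ contributes $v_2(\mathcal{F}(p))\geq2$ to $\rightthreetimes(n)$ while $v_2(\mathcal{F}(n))=v_2(n)=1$, so $(ii)$ fails, and correspondingly neither (a) nor (b) can hold, so $(iii)$ fails too. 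When $4\mid n$ one has $\mathcal{F}(n)=n$, the $2$-part contributes nothing new, and since $\gcd(p^{k-1},\mathcal{F}(p))=1$ the condition $\rightthreetimes(n)\mid n$ collapses to $\mathcal{F}(p)\mid n$ for every odd $p\mid n$, which is exactly the divisibility clause of $(iii)$ under shape (b).

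The delicate point, and what I expect to be the real obstacle, is matching the restriction ``$n/4=2,3,5$ or not prime'' in (b). This is where the arithmetic of $\mathcal{F}$ bites: for $n=4q$ with $q$ an odd prime, $\mathcal{F}(q)=q\mp1$ is coprime to $q$, so $\mathcal{F}(q)\mid n=4q$ forces $\mathcal{F}(q)\mid4$, hence $q\in\{3,5\}$ (and $q=2$ gives $n=8$, i.e. $n/4=2$), while for every larger $q$ the divisibility $\mathcal{F}(p)\mid\mathcal{F}(n)$ already fails. Thus among multiples of $4$ the numbers $4q$ with $q$ prime $\geq7$ are precisely those excluded, which is the content of the parenthetical condition; I would then confirm that in all remaining shapes (powers of $2$, and $4$ times a composite or a small prime) the divisibility clause and shape (b) are simultaneously consistent, so that $(iii)$ as stated is neither stronger nor weaker than $(ii)$. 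Assembling the three implications closes the cycle.
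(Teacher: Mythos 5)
Your proposal is correct, and it diverges from the paper in one substantive way: the treatment of i) $\Leftrightarrow$ ii). The paper dismisses that step in a single sentence (``since $\rightthreetimes(n)$ is the exponent of the group $\mathcal{G}_n$, i) and ii) are clearly equivalent''), which tacitly assumes that the elements $z/\overline{z}$ realize the full exponent of $\mathcal{G}_n$. As you correctly point out, the map $\psi(z)=z/\overline{z}$ is \emph{not} surjective when $n$ is even (its image omits $i$, because the imaginary part of $z/\overline{z}$ has $2$-adic valuation at least $2$), so i) $\Rightarrow$ ii) genuinely needs an argument; your route --- surjectivity of $\psi$ on odd prime powers by an order count (the kernel is the subgroup of real units, so the image has order exactly $\Phi(p^k)$), CRT to conclude $\rightthreetimes(m)\mid\exp(H_n)$ for the odd part $m$ of $n$, and the observation that $\rightthreetimes(2^k)\mid\mathcal{F}(n)$ holds automatically for even $n$ --- supplies exactly the missing content, at the cost of extra work the paper never does. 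For ii) $\Leftrightarrow$ iii) your case analysis modulo $4$ contains the same mathematics as the paper's two implications: odd $n$ must be square-free because $\gcd(n,\mathcal{F}(n))=1$; composite $n\equiv 2\pmod 4$ is excluded because $4\mid\mathcal{F}(p)$ for every odd prime $p$; for $4\mid n$ the condition collapses to $\mathcal{F}(p)\mid n$ by coprimality of $p^{k-1}$ and $\mathcal{F}(p)$; and the analysis of $n=4q$ with $q$ prime shows the parenthetical clause of (b) is forced by (indeed redundant given) the divisibility clause --- which is precisely the computation the paper performs at the end of its own proof. The two spots you leave as ``I would check'' (the order count on odd prime powers and the final consistency check for multiples of $4$) are routine and true as stated, so the proposal stands; its net effect is a more rigorous argument than the paper's, since it proves rather than asserts the equivalence of the Carmichael property with the exponent condition.
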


\begin{proof}
Since $\rightthreetimes(n)$ is the exponent of the group
$\mathcal{G}_n$, i) and ii) are clearly equivalent.

From Corollary \ref{corla2} and the fact that
$\rightthreetimes(mn)=\textrm{lcm}(\rightthreetimes(m),\rightthreetimes(n))$
if $\gcd(m,n)=1$, it is easy to see that iii) implies ii) when $n$
is a number that satisfies a) or b).

Finally, assume now that $\rightthreetimes(n)$ divides
$\mathcal{F}(n)$ and let be $n=2^ap_1^{r_1}\cdots p_s^{r_s}$. We
have that
$\rightthreetimes(n)=\textrm{lcm}(\rightthreetimes(2^a),\rightthreetimes(p_1^{r_1}),\dots,\rightthreetimes(p_s^{r_s}))$,
so $\rightthreetimes(2^a)$ and $\rightthreetimes(p_i^{r_i})$ divides
$\mathcal{F}(n)$. From Corollary \ref{corla2}, it is clear that for
every prime $p$, $\rightthreetimes(p)=\mathcal{F}(p) $ divides
$\rightthreetimes(p^k)$ with $k\geq 1$ and $\mathcal{F}(p)$ also
divides $\mathcal{F}(n)$ as claimed.

If $n$ is odd ($a=0$) and $r_i\geq 2$ for some $i\in\{1,\dots,s\}$
we get that $p_i$ divides $\rightthreetimes(n)$ and, consequently,
also $\mathcal{F}(n)$. Thus, $p_i$ divides $n-1$ or $n+1$ which is a
contradiction and $n$ must be square-free in this case.

We now turn to the even case. If $a=1$ and $n$ is divisible by an
other prime $p$ such that $p\equiv 1 \pmod{4}$, then $p-1$ divides
$\mathcal{F}(n)=n$. Hence $n$ is a multiple of $4$, a contradiction.
The same follows if there exist a prime $p\equiv 3 \pmod{4}$
dividing $n$ so we conclude that if $n\neq 2$ is even, it must be a
multiple of $4$.

Now, let be $n=4p$ with $p$ a prime. If $p=2$, then $n=8$ and we are
done. If $p\equiv 1 \pmod{4}$, it follows that $p-1$ divides $n$;
i.e., $p-1$ divides 4 so $p=5$ and $n=20$. Finally, if $p\equiv 3
\pmod{4}$, it follows that $p+1$ divides 4 so $p=3$ and $n=12$.
Hence we have seen that if $4$ divides $n$ and $n\neq 8,12,20$, then
$\frac{n}{4}$ is not prime and the proof in complete.
\end{proof}

In 1994 it was shown by Alford, Granville y Pomerance \cite{ALF}
that there exist infinitely many Carmichael numbers. It is
easy to see that every power of 2  is a $G-$Carmichael number, hence there
are also infinitely many of them. However, if we restrict to odd $G-$Carmichael
numbers, the problem  seems to have at least the same difficulty
as the classical case. 

Carmichael numbers have at
least three prime factors. We know that 12 and 20 are only even $G-$Carmichael numbers with only two prime factors. The following result describes the family of odd $G-$Carmichael numbers with exactly two prime factors.

\begin{prop}\label{Prop2p}
Let $p<q$ be odd primes. Then $n=pq$ is a  gaussian Carmichael
number if and only if $p$ and $q$ are twin primes such that $8$
divides $p+q$.
\end{prop}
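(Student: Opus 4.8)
The plan is to reduce everything to the Korselt-type criterion of Proposition \ref{car2}. Since $n=pq$ with $p<q$ distinct odd primes is automatically odd and square-free, condition iii)(a) of that proposition is satisfied, so $n$ is $G$-Carmichael if and only if $\mathcal{F}(p)\mid\mathcal{F}(n)$ and $\mathcal{F}(q)\mid\mathcal{F}(n)$. The whole argument then becomes a matter of unwinding these two divisibilities, which depend only on the residues of $p$ and $q$ modulo $4$: these fix $\mathcal{F}(p)$, $\mathcal{F}(q)$, and, through $pq\bmod 4$, also $\mathcal{F}(n)$.

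First I would split into the four possible pairs $(p\bmod 4,\,q\bmod 4)$. In each case $\mathcal{F}(p)=p\pm 1$, $\mathcal{F}(q)=q\pm 1$ and $\mathcal{F}(n)=pq\pm 1$ are explicit. The core computation is to reduce $\mathcal{F}(n)$ modulo $\mathcal{F}(p)$ and modulo $\mathcal{F}(q)$: using $p\equiv 1\pmod{p-1}$ (resp. $p\equiv -1\pmod{p+1}$), each condition $\mathcal{F}(p)\mid\mathcal{F}(n)$ collapses to a divisibility between the small quantities $p\pm 1$ and $q\pm 1$. I expect the outcome to be that whenever $p$ and $q$ lie in the same class modulo $4$, the two conditions force $p\pm 1=q\pm 1$, hence $p=q$, contradicting $p<q$; and when they lie in different classes, one of the two conditions becomes a divisibility of the form $q-1\mid p+1$ (or its mirror), whose only solution compatible with $p<q$ and with both primes odd is $q=p+2$.

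Pinning down $q=p+2$ is the step that requires the size and parity argument, and is where the arithmetic must be handled carefully. From $q-1\mid p+1$ with $p+1>0$ and $q-1\geq p$ (since $q>p$ and both are odd, $q\geq p+2$), we get $q-1\leq p+1$, so $q\leq p+2$, forcing $q=p+2$; the value $q=p+1$ is impossible because two odd numbers cannot be consecutive. Once $q=p+2$, I would observe that the relevant class configuration is $p\equiv 3$, $q\equiv 1\pmod 4$, which is equivalent to $8\mid p+q=2p+2$, giving exactly the twin-prime condition in the statement.

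Finally, for the converse I would verify the criterion directly for twin primes with $8\mid p+q$. Here $p\equiv 3$ and $q=p+2\equiv 1\pmod 4$, so $\mathcal{F}(p)=p+1=\mathcal{F}(q)$, while $n\equiv 3\pmod 4$ gives $\mathcal{F}(n)=pq+1=(p+1)^2$. Since $p+1$ divides $(p+1)^2$, both divisibilities hold and Proposition \ref{car2} yields that $n$ is $G$-Carmichael. I expect no difficulty here beyond the clean factorization $pq+1=(p+1)^2$, so the main obstacle remains the case analysis and the inequality forcing $q=p+2$ in the forward direction.
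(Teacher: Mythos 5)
Your proposal is correct and takes essentially the same route as the paper: both reduce to the Korselt-type criterion of Proposition \ref{car2} and run a case analysis on $(p \bmod 4,\, q \bmod 4)$, eliminating the same-class cases and pinning down $q=p+2$ with $p\equiv 3\pmod 4$ (equivalently $8\mid p+q$) in the mixed case. The only cosmetic differences are that you use a one-sided divisibility plus a size bound where the paper uses mutual divisibility in both directions, and you verify the converse explicitly via $pq+1=(p+1)^2$ where the paper simply calls it trivial.
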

\begin{proof}
Assume that $n=pq$ with $p<q$ odd primes is a $G-$Carmichael number.

If $p,q\equiv 1 \pmod 4$ , $\mathcal{F}(n)=n-1=pq-1$. From
proposition \ref{car2},  $p-1$ divides $pq-1=(p-1)(q+1)+q-p$. Hence
$p-1$ divides $q-p$ and also $q-1=(q-p)+(p-1)$. In the same way
$q-1$ divides $p-1$. So $p-1=q-1$ which is impossible. If $p,q\equiv 3
\pmod {4}$ we reach a similar contradiction using the same ideas. If
$p\equiv 1 \pmod{4}$ and $q\equiv 3 \pmod{4}$ we obtain that $p=q+2$
which is impossible because $p<q$.

Thus  $p\equiv 3  \pmod{4}$ and $q\equiv 1 \pmod{4}$ and  we have,
whit similar reasoning that $q=p+2$. Moreover, $p+q=2p+2\equiv 0
\pmod{8}$.

The converse is trivially true and the proof is complete.
\end{proof}

Recall that a positive integer $n$ which is coprime to $\phi(n)$ is called a cyclic number (sequence A003277 in \cite{OEIS}). This
terminology comes from group theory since a number $n$ is cyclic if and only if any group of order $n$
is cyclic \cite{SZ}. From  Korselt's criterion it follows that any divisor of a Carmichael number is cyclic. In the gaussian setting we define Gaussian cyclic numbers in the following way.

\begin{defi}
An integer $n$ is called $G-$cyclic if $\gcd(\Phi(n),n)=1$.
\end{defi}

The relationship between G-Carmichael and G-cyclic numbers is the same as
in the setting, the proof being also quite similar.

\begin{prop}
Any divisor of a odd $G-$Carmichael number is $G-$cyclic.
\end{prop}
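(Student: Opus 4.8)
The plan is to exploit the explicit Korselt-type description of odd $G$-Carmichael numbers already established in Proposition \ref{car2}. First I would record that if $n$ is an odd $G$-Carmichael number then, by part iii), condition a), of that proposition, $n$ is odd and square-free, and moreover $\mathcal{F}(p)$ divides $\mathcal{F}(n)$ for every prime divisor $p$ of $n$. Any divisor $d$ of $n$ inherits these features: it is odd and square-free, say $d=q_1\cdots q_t$ with distinct odd primes, and each $q_j$ is itself a prime divisor of $n$, so that $\mathcal{F}(q_j)\mid\mathcal{F}(n)$.

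Next I would compute $\Phi(d)$. Since $d$ is square-free, its prime factors are pairwise coprime, so the multiplicativity of $\Phi$ gives $\Phi(d)=\prod_{q\mid d}\Phi(q)$. By Corollary \ref{corla2}, $\Phi(q)=q-1$ when $q\equiv 1\pmod 4$ and $\Phi(q)=q+1$ when $q\equiv 3\pmod 4$; in both cases $\Phi(q)=\mathcal{F}(q)$. Hence $\Phi(d)=\prod_{q\mid d}\mathcal{F}(q)$, and proving that $d$ is $G$-cyclic reduces to showing that no prime dividing $d$ divides any of the factors $\mathcal{F}(q)$.

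The core of the argument is then a short divisibility contradiction. Suppose a prime $r$ divides $\gcd(\Phi(d),d)$. Then $r\mid d$ and $r\mid\mathcal{F}(q)$ for some prime $q\mid d$. Because $q$ is a prime divisor of $n$, we have $\mathcal{F}(q)\mid\mathcal{F}(n)$, whence $r\mid\mathcal{F}(n)$. On the other hand $r\mid d\mid n$, and since $n$ is odd, $\mathcal{F}(n)=n-1$ or $\mathcal{F}(n)=n+1$; in either case $\gcd(n,\mathcal{F}(n))=1$, so $r$ would divide $1$, which is absurd. Therefore $\gcd(\Phi(d),d)=1$ and $d$ is $G$-cyclic.

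I do not anticipate a serious obstacle here: the whole proof rests on the identity $\Phi(q)=\mathcal{F}(q)$ for odd primes together with the divisibility chain $\mathcal{F}(q)\mid\mathcal{F}(n)$, both of which are already available. The only point requiring a little care is that the square-freeness of $n$, again supplied by Proposition \ref{car2}, is exactly what licenses the multiplicativity used to factor $\Phi(d)$; without it one would have to control the extra factor $q$ appearing in $\Phi(q^k)$ for $k\geq 2$, which is precisely the phenomenon that makes non-square-free numbers fail to be $G$-cyclic.
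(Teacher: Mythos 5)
Your proof is correct and follows essentially the same route as the paper's: both use square-freeness from the Korselt-type criterion (Proposition \ref{car2}), the factorization of $\Phi(d)$ into the factors $\Phi(q)=\mathcal{F}(q)$ over the primes $q\mid d$, the divisibility $\mathcal{F}(q)\mid\mathcal{F}(n)$, and the final contradiction that a common prime factor of $d$ and $\Phi(d)$ would divide both $n$ and $\mathcal{F}(n)=n\pm 1$. The only cosmetic difference is notational (the paper writes $\mathcal{F}(p)$ as $p-\beta(p)$), and your version even avoids the paper's unnecessary remark that the two indices involved must be distinct, since your $\gcd(n,\mathcal{F}(n))=1$ argument works uniformly.
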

\begin{proof}
Let $n$ be an odd $G-$Carmichael number. Since $n$ is square-free,
$n=p_1p_2\cdots p_r$  and from proposition \ref{Phin},
$\Phi(n)=\prod (p_i-\beta(p_i))$. A divisor $d$ of $n$ is a product
of some of these primes, that is, $d=\prod_{h \in J} p_h$,
$J\subseteq \{1,2,\ldots, r\}$. If $GCD(\Phi(d),d)<>1$,, then there
exist two indices $i\neq k$ in $J$ such that $p_i$ divides
$p_k-\beta(p_k)$. As $n$ is a Carmichael number, we also have
$p_k-\beta(p_k)$ divides $n-\beta(n)$. Hence, $p_i$ divides
$n-\beta(n)$ which is absurd since $n$ is divisible by $p_i$ and
$\beta(p_i)=\pm 1$.
\end{proof}

Around 1980, G. Michon conjectured that all odd cyclic numbers have Carmichael
multiples. This can be reasonably extended to $G-$cyclic numbers
and we can ask if all odd $G-$cyclic numbers have $G-$Carmichael
multiples.

Cyclic numbers can also be characterized in terms on congruences. A
number $n$ is cyclic if and only if it satisfies $\phi(n)^{\phi(n)}
\equiv 1 \pmod n$ or $\lambda(n)^{\lambda(n)} \equiv 1 \pmod
n$. In our situation only one implication remains valid, namely.

\begin{prop}
If  $\Phi(n)^{\Phi(n)} \equiv  1 \pmod n$ or
$\rightthreetimes(n)^{\rightthreetimes(n)} \equiv  1 \pmod n$,
then $n$ is a $G-$cyclic number.
\end{prop}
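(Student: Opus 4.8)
The plan is to lean on the elementary observation that any integer $a$ satisfying $a^{k}\equiv 1 \pmod{n}$ for some $k\geq 1$ is necessarily a unit modulo $n$, and hence coprime to $n$. For the first hypothesis this is the whole argument: if $\Phi(n)^{\Phi(n)}\equiv 1 \pmod{n}$, then writing $\Phi(n)\cdot\Phi(n)^{\Phi(n)-1}\equiv 1 \pmod{n}$ exhibits $\Phi(n)$ as invertible modulo $n$, so $\gcd(\Phi(n),n)=1$ and $n$ is $G$-cyclic by definition.

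The second hypothesis needs one more step, which is where the actual content lies. The same observation applied to $\rightthreetimes(n)^{\rightthreetimes(n)}\equiv 1 \pmod{n}$ gives $\gcd(\rightthreetimes(n),n)=1$, but the definition of $G$-cyclic is phrased in terms of $\Phi(n)$. To bridge the gap I would use that $\rightthreetimes(n)=\exp(\mathcal{G}_n)$ and $\Phi(n)=|\mathcal{G}_n|$ are the exponent and the order of one and the same finite abelian group. For any finite abelian group these two numbers have exactly the same prime divisors: the exponent divides the order, so every prime dividing the exponent divides the order; conversely, if a prime $p$ divides the order, Cauchy's theorem produces an element of order $p$, and hence $p$ divides the exponent. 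Therefore $\rightthreetimes(n)$ and $\Phi(n)$ have identical sets of prime factors, and one is coprime to $n$ precisely when the other is. From $\gcd(\rightthreetimes(n),n)=1$ we conclude $\gcd(\Phi(n),n)=1$, so $n$ is again $G$-cyclic.

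The only point requiring care is the equality of the prime supports of $\rightthreetimes(n)$ and $\Phi(n)$; since $\mathcal{G}_n$ is abelian this follows at once from Cauchy's theorem, so there is no hard estimate to carry out. I would finally note that only this one-way implication survives: in contrast with the classical statement, where $\gcd(\phi(n),n)=1$ forces $\phi(n)^{\phi(n)}\equiv 1 \pmod{n}$ via Euler's theorem in $(\mathbb{Z}/n\mathbb{Z})^\star$, here the exponent $\Phi(n)$ is the order of $\mathcal{G}_n$ rather than of the ordinary unit group, so coprimality of $\Phi(n)$ with $n$ does not by itself return the congruence.
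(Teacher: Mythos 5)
Your proposal is correct, and for the first hypothesis it is essentially the paper's argument: a common prime divisor of $\Phi(n)$ and $n$ would force $\Phi(n)^{\Phi(n)}\equiv 0$ modulo that prime, which is your unit-modulo-$n$ observation stated contrapositively. For the second hypothesis, however, you take a genuinely different route. The paper works through the prime-power decomposition $n=p_1^{e_1}\cdots p_r^{e_r}$: by multiplicativity a common prime divisor $p$ of $\Phi(n)$ and $n$ must divide some $\Phi(p_j^{e_j})$, and Corollary \ref{corla2} is invoked to conclude $\Phi(p_j^{e_j})=\rightthreetimes(p_j^{e_j})$ when $p_j$ is odd (with a separate remark that $p=2$ and $\rightthreetimes(n)$ is even when $p_j=2$), whence $p\mid\rightthreetimes(n)$ and the congruence fails modulo $p$. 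You instead use the abstract fact that the order and the exponent of a finite group share the same prime support (Lagrange in one direction, Cauchy's theorem in the other), which transfers $\gcd(\rightthreetimes(n),n)=1$ to $\gcd(\Phi(n),n)=1$ with no appeal to the explicit structure of $\mathcal{G}_{p^k}$. Your argument is shorter and more general --- it would apply verbatim to any group-valued analogue of $\Phi$ and $\rightthreetimes$ --- while the paper's stays self-contained within its own earlier computations and needs no group-theoretic input beyond them. Your closing remark about why the converse breaks down is also sound and matches the paper, which notes the converse fails and lists $77, 119, 133,\dots$ as counterexamples.
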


\begin{proof}
Let $n$ be a positive integer such that $\Phi(n)^{\Phi(n)} \equiv  1
\pmod n$. Then, for any prime divisor $p$ of $n$ it holds that
$\Phi(n)^{\Phi(n)} \equiv  1 \pmod p$. Now, if $n$ is not a $G-$cyclic
number, there exists a prime $p$ with $p \mid \gcd(\Phi(n),n)$.
Thus, $p$ divides $\Phi(n)$ and $\Phi(n)^{\Phi(n)} \equiv  0 \pmod p$,
a contradiction.

On the other hand, let $n=p_1^{e_1}p_2^{e_2}\cdots p_r^{e_r}$ be a
positive integer. If $n$ is not a $G-$cyclic number then there exists
a prime $p$ which $p \mid \gcd(\Phi(n),n)$. Since $p\mid
\Phi(n)=\Phi(p_1^{e_1})\Phi(p_2^{e_2})\cdots \Phi(p_r^{e_r})$, there
exists $1\leq j\leq r$ with $p\mid \Phi(p_j^{e_j})$. If $p_j=2$ then
$p=2$ and $\rightthreetimes(n)$ is even. Otherwise,
$\Phi(p_j^{e_j})=\rightthreetimes(p_j^{e_j})$ and $p$ divides
$\rightthreetimes(n)$. So $\rightthreetimes(n)^{\rightthreetimes(n)}
\equiv 0 \pmod p$ and $n$ do not satisfy the hypothesis. 
\end{proof}

The converse of the previous proposition is no true. In fact there are $G-$cyclic numbers $n$ that do not satisfy any of
the above conditions. The first of them being:

$$ 77, 119, 133, 187, 217, 253, 287, 301, 319, 323, 341, 391,\ldots $$

\section{$G-$Lehmer's totient problem and $G-$Giuga's conjecture}

Lehmer's totient problem, named after D. H. Lehmer, asks whether there
is any composite number $n$ such that $\phi(n)$ divides $n - 1$. This is true for every prime number, and
Lehmer conjectured in 1932 \cite{LE} that the answer to his question was negative. He showed that if any such $n$ exists, it must be odd,
square-free, and divisible by at least seven primes. This numbers,
called Lehmer numbers,  are clearly Carmichael numbers and, up to date, none has been found. It is known that these numbers
have at least 15 prime factors and are greater than $10^{30}$. Moreover, if a Lehmer number is divisible by 3, the number of
prime factors increases to $40000000$ with more than 360000000
digits (see \cite{BCF}). We now define our analogue concept.

\begin{defi}
A composite number $n$ is a $G-$Lehmer number if $ \Phi (n) \mid \mathcal{F}(n)$.
\end{defi}

It is clear that every $G-$Lehmer number is a $G-$Carmichal number.
Besides, it is easy to note that $G-$Lehmer numbers exist.

\begin{prop}
Let $p<q$ be odd primes. Then $n=pq$ is a $G-$Lehmer number if and
only if $p$ and $q$ are twin primes such that $8$ divides $p+q$.
\end{prop}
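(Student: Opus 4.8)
The plan is to observe that the condition stated here is exactly the one characterizing two-prime-factor $G-$Carmichael numbers in Proposition \ref{Prop2p}, so the proposition amounts to showing that, among products of two odd primes, the $G-$Lehmer and $G-$Carmichael conditions coincide. One inclusion is automatic: since the exponent of a finite abelian group divides its order, $\rightthreetimes(n)=\exp(\mathcal{G}_n)$ divides $\Phi(n)=|\mathcal{G}_n|$, so every $G-$Lehmer number (where $\Phi(n)\mid\mathcal{F}(n)$) is a $G-$Carmichael number (where $\rightthreetimes(n)\mid\mathcal{F}(n)$, by Proposition \ref{car2}). Hence, for the forward implication it suffices to chain these facts: if $n=pq$ is a $G-$Lehmer number then it is a $G-$Carmichael number, and Proposition \ref{Prop2p} forces $p$ and $q$ to be twin primes with $8\mid p+q$.

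For the converse I would compute directly. Assuming $p<q$ are twin primes with $8\mid p+q$, from $q=p+2$ we get $8\mid 2(p+1)$, hence $4\mid p+1$, so $p\equiv 3\pmod 4$ and $q=p+2\equiv 1\pmod 4$. Proposition \ref{Phin} (in the form $\Phi(p)=p-\beta(p)$) then gives $\Phi(p)=p+1$ and $\Phi(q)=q-1=p+1$, whence $\Phi(n)=\Phi(p)\Phi(q)=(p+1)^2$. Since $n=pq\equiv 3\pmod 4$ we have $\mathcal{F}(n)=n+1=p(p+2)+1=(p+1)^2$, so in fact $\Phi(n)=\mathcal{F}(n)$; the divisibility $\Phi(n)\mid\mathcal{F}(n)$ holds a fortiori and $n$ is a $G-$Lehmer number.

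Rather than a genuine obstacle, the only delicate point is the elementary congruence bookkeeping: I would double-check that under $q=p+2$ the condition $8\mid p+q$ is equivalent to $p\equiv 3$ and $q\equiv 1\pmod 4$, since this is what determines whether $p-1$ or $p+1$ occurs in each factor of $\Phi$. It is worth recording that the computation yields the stronger equality $\Phi(n)=\mathcal{F}(n)=(p+1)^2$, so the two-prime-factor $G-$Lehmer numbers are precisely the two-prime-factor $G-$Carmichael numbers.
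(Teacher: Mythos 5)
Your proposal is correct and follows essentially the same route as the paper: both reduce the forward direction to Proposition \ref{Prop2p} via the observation that $G$-Lehmer implies $G$-Carmichael (the paper states this just before the proposition; you supply the justification that $\rightthreetimes(n)\mid\Phi(n)$ since the exponent divides the order), and both prove the converse by computing $\Phi(n)=\mathcal{F}(n)=(p+1)^2$, which the paper writes as $(4k)^2$ with $n=(4k-1)(4k+1)$. Your write-up is merely more explicit about the congruence bookkeeping that the paper leaves implicit.
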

\begin{proof}
As $n$ must be a $G-$Carmichael number, $n=(4k-1)(4k+1)$ where $4k\pm
1$ are both primes. From this numbers
$\Phi(n)=\mathcal{F}(n)=(4k)^2$, so $n$ is a $G-$Lehmer number.
\end{proof}

Note that, from Proposition \ref{Prop2p} this result means that every odd $G-$Carmichael number with exactly 2 prime factors is a $G-$Lehmer number. Nevertheless, there are $G-$Lehmer numbers with more thatn 2 prime factors (A182221  in \cite{OEIS}):

$$ 255, 385, 34561, 65535, 147455, 195841, \dots  $$

This suggests an interesting question:

\begin{con}
Are there infinitely many $G-$Lehmer numbers?
\end{con}

Furthermore, all known $G-$Lehmer numbers satisfy that $ \mathcal{F}(n)
=\Phi(n) $. Hence, it is reasonable to propose the $G-$Lehmer's
Totient problem:

\begin{con}
Is there any number $n$ such that $\phi(n)$ is a proper divisor of
$\mathcal{F}(n)$?
\end{con}

In 1932, Giuga \cite{GIU} proposed another conjecture about prime
numbers. He postulated that a number $p$ is prime if and only if $
\sum i^{p-1}\equiv -1 \pmod p $, where the sum is taken over all
integers $1\leq i\leq p-1$. Giuga showed that there are no
exceptions to his conjecture up to $10^{1000}$. This was later
improved to $10^{13800}$ \cite{BOR}. A similar approach to
Giugas's conjecture, replacing $n-1$ by $\mathcal{F}(n)$, leads us
to consider the following set, which contains all prime numbers.

$$\mathfrak{G} := \{n \in \mathbb{N}:\sum_{z \in \mathcal{G}_n}
z^{\mathcal{F}(n)} \equiv \mathcal{F}(n) \pmod{n}\}$$

However, this set also contains lots of composite numbers. For
example, every power of 2 is in $\mathfrak{G}$. For odd integers we
have the next result.

\begin{prop}
Let be $n$ an odd integer. If $\Phi(n)=\mathcal{F}(n)$, then $n \in
\mathfrak{G}$.
\end{prop}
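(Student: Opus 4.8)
The plan is to observe that the hypothesis $\Phi(n)=\mathcal{F}(n)$ turns the exponent $\mathcal{F}(n)$ into the order of the group $\mathcal{G}_n$, so that Lagrange's theorem collapses every summand to the identity. In other words, the whole statement should reduce to a counting of the elements of $\mathcal{G}_n$.

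First I would recall that $\mathcal{G}_n$ is a finite abelian group under multiplication in $\mathcal{I}_n$, with identity element $1=1+0i$, and that by definition $|\mathcal{G}_n|=\Phi(n)$. By Lagrange's theorem the order of each $z\in\mathcal{G}_n$ divides $\Phi(n)$, hence $z^{\Phi(n)}=1$ in $\mathcal{I}_n$ for every $z\in\mathcal{G}_n$. Using the hypothesis $\Phi(n)=\mathcal{F}(n)$, I would then rewrite the exponent to obtain $z^{\mathcal{F}(n)}=z^{\Phi(n)}=1$ for every $z\in\mathcal{G}_n$.

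Summing over the group immediately yields
$$\sum_{z\in\mathcal{G}_n}z^{\mathcal{F}(n)}=\sum_{z\in\mathcal{G}_n}1=|\mathcal{G}_n|=\Phi(n)=\mathcal{F}(n),$$
where the middle equalities take place in $\mathcal{I}_n$ (the sum is $\Phi(n)$ copies of the real unit, and $\Phi(n)=\mathcal{F}(n)$ as integers). Reducing modulo $n$ this says exactly that $\sum_{z\in\mathcal{G}_n}z^{\mathcal{F}(n)}\equiv\mathcal{F}(n)\pmod n$, i.e.\ $n\in\mathfrak{G}$, which is the desired conclusion.

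There is essentially no obstacle here: the argument is a direct application of Lagrange's theorem. The only points needing a moment's care are that the group identity is the real unit $1$, so that each summand is genuinely the integer $1$ inside $\mathcal{I}_n$, and that $\Phi(n)\cdot 1$ reduces modulo $n$ to $\mathcal{F}(n)\bmod n$, which is immediate from the hypothesis. I would also remark that oddness of $n$ is never actually invoked: the same computation works whenever $\Phi(n)=\mathcal{F}(n)$ (in fact whenever $\Phi(n)\mid\mathcal{F}(n)$, since then $z^{\mathcal{F}(n)}=(z^{\Phi(n)})^{\mathcal{F}(n)/\Phi(n)}=1$), so the hypothesis ``$n$ odd'' merely isolates the interesting case, the even powers of $2$ having already been treated separately.
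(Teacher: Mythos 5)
Your main argument is correct and is essentially the paper's own proof: Lagrange's theorem collapses every summand to the identity $1$, and the sum then counts the $|\mathcal{G}_n|=\Phi(n)=\mathcal{F}(n)$ elements of the group. Your observation that oddness is never actually used in this direction is also accurate (the hypothesis is there because the converse, conjectured later in the paper, fails badly for even $n$, e.g.\ powers of $2$).

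One caveat: your parenthetical generalization to the hypothesis $\Phi(n)\mid\mathcal{F}(n)$ is false. Under that weaker assumption you do get $z^{\mathcal{F}(n)}=1$ for every $z\in\mathcal{G}_n$, but then the sum is $\equiv\Phi(n)\pmod n$, whereas membership in $\mathfrak{G}$ requires it to be $\equiv\mathcal{F}(n)\pmod n$; these agree only if $n\mid\mathcal{F}(n)-\Phi(n)$. If $\Phi(n)$ were a \emph{proper} divisor of $\mathcal{F}(n)=n\pm1$ (for odd $n$), then $0<\mathcal{F}(n)-\Phi(n)<n$, since $\Phi(n)\leq\mathcal{F}(n)/2$ and $\Phi(n)>1$, so the two residues can never coincide and in fact $n\notin\mathfrak{G}$ by this very computation. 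The equality $\Phi(n)=\mathcal{F}(n)$ is therefore genuinely needed, not merely a convenient special case.
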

\begin{proof}
Since $|\mathcal{G}_n|=\Phi(n)$, for all $z \in \mathcal{G}_n$,
$z^{\Phi(n)} \equiv 1 \pmod n$. If $\Phi(n)=\mathcal{F}(n)$ then
$$\sum_{z \in \mathcal{G}_n} z^{\Phi(n)} \equiv |\mathcal{G}_n|
\equiv  \Phi(n) \equiv \mathcal{F}(n) \pmod n,$$ and $n$ is in
$\mathfrak{G}$. \end{proof}

Thus, prime numbers and every known $G-$Lehmer numbers are in
$\mathfrak{G}$. Furthermore, no other odd composite integer is known
to be in $\mathfrak{G}$. So, we formulate the following conjecture
regarding numbers in $\mathfrak{G}$.

\begin{conj}
For every odd $n$,  $n \in \mathfrak{G}$ if and only if
$\Phi(n)=\mathcal{F}(n)$.
\end{conj}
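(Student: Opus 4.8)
The reverse implication is precisely the content of the preceding proposition, so everything hinges on the forward direction: assuming $n$ is odd and $n\in\mathfrak{G}$, I want to deduce $\Phi(n)=\mathcal{F}(n)$. The plan is to localize the defining congruence and evaluate the power sum $\sum_{z\in\mathcal{G}_n}z^{\mathcal{F}(n)}$ one prime at a time. Writing $n=p_1^{r_1}\cdots p_s^{r_s}$ and using the isomorphism $\mathcal{G}_n\cong\prod_j\mathcal{G}_{p_j^{r_j}}$ from Section 2, the Chinese Remainder Theorem turns $n\in\mathfrak{G}$ into the system
\[
\frac{\Phi(n)}{\Phi(p^r)}\sum_{z\in\mathcal{G}_{p^r}}z^{\mathcal{F}(n)}\equiv\mathcal{F}(n)\pmod{p^r},\qquad p^r\|n,
\]
because in a direct product the sum in the coordinate attached to $p^r$ is repeated $\Phi(n)/\Phi(p^r)=\prod_{i\neq j}|\mathcal{G}_{p_i^{r_i}}|$ times.

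The core of the argument is a power-sum lemma. For $G=\mathcal{G}_{p^r}$ and $k=\mathcal{F}(n)$, the $k$-th power map has image $H\leq G$ and kernel of size $\Phi(p^r)/|H|$, so $\sum_{z\in G}z^k=\frac{\Phi(p^r)}{|H|}\sigma$ in $\mathcal{I}_{p^r}$, where $\sigma=\sum_{w\in H}w$. For every $t=a+bi\in G$ one computes $N(t-1)=(a-1)^2+b^2\equiv 2(1-a)\pmod{p^r}$, so $t-1$ is a unit of $\mathcal{I}_{p^r}$ whenever $a\not\equiv 1\pmod p$; since $(t-1)\sigma=0$ for each $t\in H$, this forces $\sigma=0$ as soon as $H$ escapes the reduction kernel $\ker(\mathcal{G}_{p^r}\to\mathcal{G}_p)\cong C_{p^{r-1}}$ (the only element of $\mathcal{G}_p$ with real part $\equiv 1$ being the identity). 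When instead $H\subseteq C_{p^{r-1}}$, the subgroup $H$ is closed under conjugation, so $\sigma$ is a real integer with $\sigma\equiv|H|\pmod p$.

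From this I would read off, prime by prime, strong structural constraints. Because $p\mid n$ gives $\mathcal{F}(n)\equiv\mp 1\not\equiv 0\pmod p$, any local sum that vanishes contradicts the system; this rules out $H\not\subseteq C_{p^{r-1}}$ (forcing $\mathcal{F}(p)\mid\mathcal{F}(n)$) and, for $r\geq 2$, also rules out $H=C_{p^{r-1}}$, since then $\sigma\equiv p^{r-1}\equiv 0\pmod p$ makes the local sum vanish modulo $p$ as well. Hence $n$ must be squarefree and, by Proposition \ref{car2}, a $G$-Carmichael number. But for such $n$ one has $z^{\mathcal{F}(n)}=1$ for every $z\in\mathcal{G}_n$ (as $\rightthreetimes(n)\mid\mathcal{F}(n)$), whence $\sum_{z\in\mathcal{G}_n}z^{\mathcal{F}(n)}=\Phi(n)$, and the whole system collapses to the single congruence $\Phi(n)\equiv\mathcal{F}(n)\pmod n$; by Proposition \ref{Phin} this reads $\prod_{p\mid n}(p-\beta(p))\equiv n-\beta(n)\pmod n$.

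The final step is the genuine obstacle: passing from $\Phi(n)\equiv\mathcal{F}(n)\pmod n$ to the \emph{equality} $\Phi(n)=\mathcal{F}(n)$. Since $\Phi(n)-\mathcal{F}(n)$ is then a multiple of $n$, equality amounts to showing this multiple is zero, i.e. that $\Phi(n)$ stays below $2n$ in the relevant range; but we have already seen that $\limsup\Phi(n)/n=\infty$, so $\Phi(n)$ may far exceed $n$ and the congruence alone cannot close the gap. One must instead exploit the $G$-Carmichael constraints $\mathcal{F}(p)\mid\mathcal{F}(n)$ to bound $\Phi(n)$, which is exactly the kind of difficulty met in the $G$-Lehmer totient problem posed above and in ruling out Giuga-type numbers classically. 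I expect this bounding step to lie beyond the elementary group-theoretic bookkeeping that settles the earlier reductions, which is precisely why the statement is recorded as a conjecture rather than a theorem.
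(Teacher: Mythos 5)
You were asked to prove a statement that the paper itself records as a \emph{conjecture}: the authors give no proof, only the immediately preceding proposition (for odd $n$, $\Phi(n)=\mathcal{F}(n)$ implies $n\in\mathfrak{G}$), which is exactly your reverse implication. So there is no paper proof to compare against, and your proposal --- rightly --- does not claim to be one: the point where you stop, namely upgrading the congruence $\Phi(n)\equiv\mathcal{F}(n)\pmod n$ to the equality $\Phi(n)=\mathcal{F}(n)$, is precisely the open content of the conjecture. This is the same obstruction that makes Giuga's conjecture hard (a congruence modulo $n$ for a quantity that can exceed $n$), and as you observe, $\limsup \Phi(n)/n=\infty$ kills any size argument; no elementary bookkeeping is known to close this gap. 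So the verdict is: incomplete as a proof, but honestly and correctly flagged as such.

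That said, your partial reduction is sound and goes further than anything in the paper, so it is worth recording what it actually establishes. The localization of the defining congruence via $\mathcal{G}_n\cong\prod_j\mathcal{G}_{p_j^{r_j}}$, the power-sum lemma $\sum_{z\in G}z^k=(|G|/|H|)\sum_{w\in H}w$ with $H$ the image of the $k$-th power map, the computation $N(t-1)\equiv 2(1-a)\pmod{p^r}$ showing that $t-1$ is a unit of $\mathcal{I}_{p^r}$ whenever $t=a+bi$ lies outside the kernel of reduction (the only element of $\mathcal{G}_p$ with real part $\equiv 1$ is indeed $1$), and the resulting annihilation $(t-1)\sigma=0\Rightarrow\sigma=0$ are all correct. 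Two small repairs: when $H\subseteq\ker(\mathcal{G}_{p^r}\to\mathcal{G}_p)\cong C_{p^{r-1}}$ with $r\geq 2$, you must rule out every subgroup $C_{p^j}$ with $j\geq 1$, not only $H=C_{p^{r-1}}$ (the same argument $\sigma\equiv|H|\equiv 0\pmod p$ works, since every element of the kernel is $\equiv 1 \pmod p$); and the trivial case $H=1$ is impossible for $r\geq 2$ because it forces $\rightthreetimes(p^r)\mid\mathcal{F}(n)$ while $p\mid\rightthreetimes(p^r)$ and $p\nmid\mathcal{F}(n)$. With those patches, what you have genuinely proved is a clean characterization: an odd $n$ lies in $\mathfrak{G}$ if and only if $n$ is squarefree, $\mathcal{F}(p)\mid\mathcal{F}(n)$ for every prime $p\mid n$ (so $n$ is prime or $G$-Carmichael by Proposition \ref{car2}), and $\Phi(n)\equiv\mathcal{F}(n)\pmod n$. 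That is a weaker statement than the conjecture, but it is a correct and nontrivial reduction of it.
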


\section{Gaussian Fermat test for numbers of the form $4k+3$. }

The use of gaussian integers to perform the equivalent of Fermat's
little theorem to test primality is not just a mere theoretical
speculation. Lucas pseudoprimes \cite{WI} for some particular
sequences can be also seen as gaussian pseudoprimes. However,
Gaussian integers, and the corresponding definition of peudoprimes
using powers, is more similar to the classical one than the concept
of Lucas sequences.

As we have said before, we can take advantage of Proposition
\ref{FermatGaussiano} to test primality (more precisely
compositeness) of a number. This is that we call the Gaussian Fermat
Test with respect to the base $z$. Computational evidence reveals
that this test, based on the structure of $\mathcal{G}_N$, is very
powerful when it is combined with the classical one; i.e., there
are very few common pseudoprimes. Furthermore, this combination is
more stronger if we restrict to  numbers of the form $4k +3 $. From the William Galway list
\cite{GAL}, we
have checked that every Fermat pseudoprime number to base 2 less
than $10^{18}$ and of the form $4k +3 $ is not a Gaussian pseudoprime to base $z=1+2i$.

Baillie-PSW primality test \cite{PSW}, used in a lot of computer
algebra systems and software packages, is also a combination of two
primality tests. More precisely it is a strong Fermat probable prime
test to base 2 and a strong Lucas probable prime test. As the
previous combination, no composite number below $10^{19}$ passes it,
but it considers two strong type-test in contrast of our two basic
Fermat type tests. On the other hand, there are integers of the form $4k+3$ which are both Fermat pseudoprimes
to base 2 and Lucas pseudoprimes (see sequence
A227905 in \cite{OEIS}).

In general, combinations of two Fermat test with respect to two
different prime basis (less than 30) present more than 10 (and a mean of
34) pseudoprimes lower than $ 4 \cdot 10^7 $ of the form $ 4k +3$.
Even if we combine two basis to test if a number $n$ is a prime
using the Gaussian Fermat Test, there are more  pseudoprimes.
However, there is no composite number of the form $4k+3$ less than
$4 \cdot 10^7$ which is both a Gaussian pseudoprimes with respect to $1+2i$
and a Fermat pseudoprime with respect to a prime base less than 30.
The lowest base to be used to find a Fermat pseudoprime with respect
this base which is also a Gaussian Fermat pseudoprime to the base $
1 +2 i $ is $10$. 
Also with other Gaussian basis the combination with a Fermat test is
very strong as it is shown in the following table, which presents
the number of composite integers less than $ 4 \cdot 10^7 $ which
are simultaneously Gaussian Fermat pseudoprimes with respect to a
base $z$ (horizontal) and Fermat pseudoprimes with respect to a base
$a$ (vertical).

$$\begin{tabular}{|c |c |c | c|c | c|c |c | c|c |c |}
  \hline
   base & 2 & 3 & 4 & 5 & 6 &7 & 8 & 9 & 10 & 11  \\ \hline
  1 + 2$i$ & 0 & 0 & 0 & 0 & 0 & 0 & 0 & 0 & 1 & 0 \\ \hline
  1 + 4$i$ &  0 & 0 & 1 & 0 & 0 & 0 & 0 & 0 & 0 & 1  \\ \hline
  1 + 6$i$ & 0 & 1 & 2 & 0 & 2 & 0 & 0 & 1 & 0 & 1 \\ \hline
 1 + 10$i$ &  0 & 1 & 1 & 0 & 0 & 0 & 2 & 1 & 2 & 1 \\ \hline
  2 + 5$i$ & 0 & 0 & 1 & 0 & 1 & 0 & 0 & 0 & 0 & 1 \\ \hline
 2 + 7$i$ & 0 & 0 & 1 & 0 & 1 & 0 & 2 & 1 & 0 & 1 \\ \hline
 3 + 8$i$ & 0 & 1 & 2 & 1 & 0 & 0 & 1 & 1 & 0 & 1 \\ \hline
  3 + 10$i$ &  0 & 1 & 2 & 0 & 1 & 0 & 2 & 1 & 1 & 1  \\ \hline
  4 + 5$i$ & 0 & 0 & 1 & 0 & 0 & 0 & 1 &    0 & 0 & 1 \\ \hline
  4 + 9$i$ &   0 & 0 & 1 & 0 & 0 & 0 & 1 & 0 & 0 & 1  \\ \hline
\end{tabular} $$

One of the reasons explaining this phenomenon is that
Carmichael numbers, which always appear when combining two
classical Fermat tests, are avoided when we combine a Fermat test and a
Gaussian Fermat test, because Carmichael numbers are not necessarily
$G-$Carmichael numbers and conversely. In fact, there are no Carmichael
numbers of the form $4k +3 $ smaller than  $ 10 ^{18} $ which are
also $G-$Carmichael numbers.

Recall that an integer $n$ is
called an $r-$Williams number \cite{Ec,WI} if 
$$p \mid n\Rightarrow
(p+r) | (n+r)\textrm{ and } (p-r) | (n-r)$$

The following result relates our previous discussion with $1-$Williams numbers.

\begin{prop}
An odd number $n \equiv 3 \pmod 4$ is simultaneously a Carmichael
number and a $G-$Carmichael number if and only if $n$ is an $1-$Williams
number and $p \equiv 3 \pmod 4$ for every $p$ dividing $n$.
\end{prop}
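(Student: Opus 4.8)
The plan is to reduce each of the three properties in the statement to divisibility conditions on the prime divisors of $n$ and then match them up. Throughout one uses that, since $n\equiv 3\pmod 4$, one has $\mathcal{F}(n)=n+1$, while for an odd prime $p$ one has $\mathcal{F}(p)=p-1$ when $p\equiv 1\pmod 4$ and $\mathcal{F}(p)=p+1$ when $p\equiv 3\pmod 4$ (this is just $\mathcal{F}(p)=p-\beta(p)$). The three characterizations to be used are: by Korselt's criterion, $n$ is a Carmichael number if and only if $n$ is square-free and $p-1\mid n-1$ for every prime $p\mid n$; by Proposition~\ref{car2} (case iii.a, which applies since $n$ is odd), $n$ is $G$-Carmichael if and only if $n$ is square-free and $\mathcal{F}(p)\mid n+1$ for every $p\mid n$; and $n$ is a $1$-Williams number precisely when it is square-free and $p-1\mid n-1$ and $p+1\mid n+1$ for every $p\mid n$.

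First I would treat the forward implication, assuming $n$ is both Carmichael and $G$-Carmichael. Korselt already forces $n$ to be square-free. The crucial step is to pin down the residues of the prime divisors modulo $4$: if some $q\mid n$ satisfied $q\equiv 1\pmod 4$, then $\mathcal{F}(q)=q-1$ would divide $n+1$ (by the $G$-Carmichael condition) and also $n-1$ (by Korselt), whence $q-1\mid (n+1)-(n-1)=2$; as $q\ge 5$ this is impossible. Hence every $p\mid n$ satisfies $p\equiv 3\pmod 4$, so $\mathcal{F}(p)=p+1$. The $G$-Carmichael condition then reads $p+1\mid n+1$ and Korselt gives $p-1\mid n-1$; together with square-freeness these are exactly the defining conditions of a $1$-Williams number.

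For the converse I would start from a $1$-Williams number $n$ all of whose prime divisors are $\equiv 3\pmod 4$. Square-freeness and $p-1\mid n-1$ for all $p\mid n$ give, by Korselt, that $n$ is Carmichael. Moreover, since each $p\equiv 3\pmod 4$ we have $\mathcal{F}(p)=p+1$, so the Williams relation $p+1\mid n+1=\mathcal{F}(n)$ together with square-freeness yields, via Proposition~\ref{car2}, that $n$ is $G$-Carmichael.

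The main content is the mod-$4$ dichotomy in the forward direction, whose kernel is the observation that $\gcd(n-1,n+1)=2$ forces any prime divisor of the "wrong" residue class to be too small to exist. The point to watch in the converse is square-freeness: the displayed divisibility relations alone do \emph{not} force $n$ to be square-free (for example $n=27$ satisfies $2\mid 26$ and $4\mid 28$ yet is a prime cube), so the argument genuinely relies on square-freeness being part of the notion of a $1$-Williams number, exactly as it is part of the Carmichael and $G$-Carmichael characterizations.
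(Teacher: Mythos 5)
Your proof is correct and follows essentially the same route as the paper's: both directions are reduced, via Korselt's criterion and Proposition~\ref{car2}, to divisibility conditions on the prime divisors of $n$, and the heart of the forward implication is ruling out prime divisors $q\equiv 1\pmod 4$. Two differences are worth recording. First, your exclusion argument derives $q-1\mid\gcd(n-1,n+1)=2$ from the Carmichael and $G$-Carmichael conditions together, whereas the paper uses only the Carmichael condition: $4\mid q-1\mid n-1$ contradicts $n\equiv 3\pmod 4$; both are fine. Second, and more substantively, your closing remark about square-freeness repairs a point the paper glosses over: in the converse the paper concludes ``so $n$ is a Carmichael number'' directly from $p-1\mid n-1$ and $p+1\mid n+1$, but Korselt's criterion also requires $n$ to be square-free, and the displayed divisibility relations alone do not force this --- your example $n=27$ (where $2\mid 26$ and $4\mid 28$) shows the implication would fail under the paper's bare divisibility definition of a $1$-Williams number. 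So the statement is only true when square-freeness is built into the notion of $1$-Williams number (as in the cited definitions of Williams and Echi), and your proof is the more careful one on exactly this score.
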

\begin{proof}
Let $n \equiv 3 \pmod 4$, then
$\mathcal{F}(n)=n+1$. If $n$ is both a Carmichael and a $G-$Carmichael number we have that, for every $p$ dividing $n$:
$$
  \begin{array}{ll}    p-1 \mid n-1, &  \\
    p-1\mid n+1, & \hbox{if $p\equiv 1 \pmod{4}$,} \\
    p+1\mid n+1, & \hbox{if $p\equiv 3 \pmod{4}$.}
  \end{array}
$$

Now, if there exists a prime factor $p\equiv 1 \pmod{4}$ it follows that
$n-1=(p-1)k\equiv 0\pmod{4}$, a contradiction. Hence, every prime
factor is congruent with 3 modulo 4 and $n$ is a $1-$Williams
number.

On the other hand, if $n$ is a $1-$Williams number, then for each prime
factor $p$ of $n$ we have $p-1\mid n-1$ and $p+1\mid n+1$, so $n$ is
a Carmichael number. If $n$ is to be a $G-$Carmichael it is also necessary that
every factor $p\equiv 1 \pmod{4}$ satisfies $p-1\mid n+1$. But, by
hypothesis, $n$ does not have this kind of factors and the result follows. 
\end{proof}

Thus, the search for a number of the form $n \equiv 3 \pmod 4$ which is both a $G-$Carmichael number and a Carmichael
number is harder than to find a $1-$Williams number and, up to date, no $1-$Williams number is known

\end{document}